\newtheorem{theorem}{Theorem}
\newtheorem{corollary}[theorem]{Corollary}
\newtheorem{lemma}[theorem]{Lemma}
\def\pd#1{\partial_{#1}}
\def\indicator#1{{\bf 1}_{#1}}
\def\rank#1{{\rm rank}\ #1}
\def\Prob{{\bf P}}
\def\x{x} %\def\x{\tau}
\def\y{y} %\def\y{\theta}
\def\t{t} %\def\t{x}
\def\d{d} 
\title{
Calculation of orthant probabilities by the holonomic gradient method
}
\author{Tamio Koyama\thanks{Department of Mathematics, Kobe University}\ \  and Akimichi Takemura\thanks{Graduate School of Information Science and Technology, University of Tokyo}\ \thanks{JST CREST}}
\begin{document}

\maketitle

\begin{abstract}
We apply the holonomic gradient method (HGM) introduced by \cite{n3ost2}
to the calculation of orthant probabilities of multivariate normal distribution.
The holonomic gradient method applied to orthant probabilities is found to be
a variant of Plackett's recurrence relation (\cite{plackett}).  However 
an implementation of the method yields recurrence relations
more suitable for numerical computation than Plackett's recurrence relation.
We derive some theoretical results on 
the holonomic system for the orthant probabilities.
These results show that multivariate normal orthant probabilities possess some
remarkable properties from the viewpoint of holonomic systems.
Finally we show that numerical performance
of our method is comparable or superior compared to existing methods.
\end{abstract}

\if0
\noindent
{\bf Keywords}\/.
orthant probability,
holonomic gradient method, 
integrable connection, Pfaffian system

\noindent
32C38,% Sheaves of differential operators and their modules, $D$-modules
16S32,% Rings of differential operators
62H10 %Distribution of statistics
\fi

\section{Introduction}\label{sec:intro}
The holonomic gradient method (HGM) introduced by \\
Nakayama et al.\ \cite{n3ost2}
is a new method of numerical calculation which utilizes algebraic properties 
of differential equations. 
This method has many applications in statistics. 
For example, 
an application to the evaluation of the exact distribution function of
the largest root of a Wishart matrix is introduced in \cite{hnt2} 
and an application to the maximum likelihood estimation for the
Fisher-Bingham distribution on the $d$-dimensional sphere is introduced in \cite{kn2t}.  These applications greatly expand the scope of the field of algebraic statistics.

In this paper, we utilize the holonomic gradient method for 
an accurate evaluation of the  orthant probability
\begin{equation}\label{eq:orth_prob}
\Phi(\Sigma, \mu)={\bf P}(X_1 \geq 0, \dots, X_d\geq 0),
%(E := \{
%x=(x_1,\dots, x_\d)\in {\bf R}^\d 
%\vert x_i \geq 0,\ 1 \leq i \leq \d
%\})
\end{equation}
where the $d$-dimensional random vector $X=(X_1,\dots, X_\d) \in {\bf R}^\d$ is normally
distributed with mean $\mu$ and covariance matrix $\Sigma$, i.e., 
$X \sim N(\mu, \Sigma)$.
Since evaluation of  the orthant probability has important applications in
statistical practice, there are many studies about it.
Genz introduced a method to calculate the orthant probability utilizing
the quasi Monte-Carlo method in \cite{Genz1}.
Miwa, Hayter and Kuriki proposed a recursive
integration algorithm to evaluate the orthant probability in
\cite{Miwa}. 

When the mean vector $\mu$ is equal to zero, 
the orthant probability can be interpreted as the area of 
a $d-1$ dimensional spherical simplex (cf.\ \cite{aomoto}).
In \cite{schlafli}, Schl\"{a}fli gave a classical differential recurrence 
formula for $\Phi(\Sigma, 0)$.
Plackett generalized Schl\"{a}fli's result 
and gave a recurrence formula for $\Phi(\Sigma, \mu)$
in \cite{plackett}. 
He evaluated orthant probabilities 
by a recursive integration utilizing his formula.
Gassmann implemented the  Plackett's method in the case of higher
dimensions in \cite{Gassmann}.
Their method is a recursive integration 
based on a differential recurrence formula, whereas
the holonomic gradient method utilizes differential equations.
Plackett's recurrence formula is not suitable for 
the holonomic gradient method and in this paper 
we give a new recurrence formula, which is more natural from 
the viewpoint of holonomic gradient method.

Let 
$
\y = \Sigma^{-1}\mu
$
and 
$\x =-\frac{1}{2}\Sigma^{-1}$.
We denote the $i$-th element of $\y$ by $\y_i$ and
the $(i,j)$ element of $\x$ by $\x_{ij}$.
By this transformation of the parameters,
the orthant probability in $(\ref{eq:orth_prob})$ can be written as
\[
%\begin{equation}
%\label{eq:orth_prob2}
\left(-\pi\right)^{-d/2}
\det (x)^{1/2}
\exp\left(
-\frac{1}{2}\y^t\x^{-1}\y
\right)g(\x, \y), 
\]
%\end{equation}
where 
\begin{equation}
\label{eq:g}
g(\x, \y) =
\int_0^\infty\dots\int_0^\infty
 \exp\left(\sum_{i,j=1}^\d\x_{ij}\t_i\t_j+\sum_{i=1}^\d\y_i\t_i\right) d\t \qquad
(dt = dt_1 \dots dt_d).
\end{equation}
In order to evaluate the orthant probability, it is enough to evaluate
$g(\x,\y)$. 

To apply the holonomic gradient method, we need an explicit 
form of a Pfaffian system (\cite[Section 2]{kn2t}) associated with $g(\x,\y)$.
The Pfaffian system can be obtained from a holonomic system for $g(\x,\y)$
(see \cite{n3ost2}).
For a given $\d$, we can obtain a holonomic system for $g(\x,\y)$
by applying the algorithms introduced in \cite{Oaku3}
with computer algebra systems.
However, for general $\d$, these algorithms can not be applied
and we need theoretical considerations.
In this paper, we give a holonomic system for the function
$g(\x,\y)$ and construct a Pfaffian system from the holonomic
system. 
%This Pfaffian system is written as a recurrence formula.

Note that the integral $g(\x,\y)$ satisfies 
an incomplete $A$-hypergeometric system (\cite{NT})
when the integration domain is not the orthant but a polytope.

%In section $\ref{review}$, we review the methodology of the holonomic
%gradient method and the theory of Schwartz distributions.
The organization of this paper is as follows.
In Section $\ref{sec:expectation}$, we describe a holonomic system for
an integral
\begin{equation}
 \label{eq:general_g}
g(\x, \y)
 =\int_{{\bf R}^\d}
 f(t)\exp\left(\sum_{i,j=1}^\d\x_{ij}\t_i\t_j+\sum_{i=1}^\d\y_i\t_i\right) 
 dt, 
\end{equation}
where $f(t)$ is a holonomic function or a holonomic distribution.
In Section $\ref{sec:orthant}$, we construct a holonomic system for the
function in $(\ref{eq:g})$  by utilizing the result in Section
$\ref{sec:expectation}$. Then we construct a Pfaffian system from the
holonomic system.
%In section $\ref{sec:rank}$, we give the holonomic rank of the holonomic
%system introduce in section $\ref{sec:Pfaff}$.
Finally in Section $\ref{sec:numerical}$ we describe numerical experiments of
the holonomic gradient method.

\section{Holonomic system associated with the expectation under  multivariate normal distributions}\label{sec:expectation}
In this section, we consider the holonomic ideal which annihilates
the integral $(\ref{eq:general_g})$, which is the expectation under
multivariate normal distributions
(for the definition of the holonomic ideal, see \cite{SST}).
 We develop a general theory, where
$f(t)$ in \eqref{eq:general_g} is a smooth function or a distribution in the sense of  Schwartz (\cite{Schwartz}).
This theory is a generalization of the result introduced in \cite{koyama}.
In Section  $\ref{sec:orthant}$ we will specialize $f(t)$ to be the indicator function of the positive orthant.
Note that the results of this section can be applied to various problems of the multivariate
normal distribution theory, other than the orthant probability.

We denote  the ring of differential operators in $x$ with polynomial coefficient 
by $D={\bf C}\langle x_1,\dots,x_n, \pd{x_1}, \dots, \pd{x_n} \rangle$. 
The operator $\partial/\partial x_i $ is denoted by $\pd{x_i}$. 
We frequently use the following rings:
\begin{align}
D_{\x\y\t} &:= 
{\bf C}\langle \x_{ij}, \y_k, \t_k, \pd{\x_{ij}}, \pd{\y_k}, \pd{t_k}: 1\leq i\leq j \leq \d, 1\leq k \leq \d \rangle, \nonumber \\
D_{\x\y} &:= {\bf C}\langle \x_{ij}, \y_k, \pd{\x_{ij}}, \pd{\y_k}: 1\leq i\leq j \leq \d, 1\leq k \leq \d \rangle ,\nonumber \\
D_{\x} &:= {\bf C}\langle \x_{ij}, \pd{\x_{ij}}: 1\leq i\leq j \leq \d \rangle,\nonumber\\
D_\t &:= {\bf C}\langle \t_i, \pd{\t_i}: 1\leq i \leq \d\rangle .
\label{eq:notation-for-rings}
\end{align}
We use the following notation.
\begin{equation}
\label{eq:notation}
y = \Sigma^{-1}\mu, \quad x = -\frac{1}{2}\Sigma^{-1}, \quad 
h(x,y,t) = \sum_{i,j=1}^\d\x_{ij}\t_i\t_j+\sum_{i=1}^\d\y_i\t_i.
\end{equation}

\subsection{The case of a smooth function}
We first consider the case when $f(t)$ 
is a smooth function. In this case, the integral $(\ref{eq:general_g})$ converges if the 
matrix $-\x$ is positive definite and 
$f(t)$ is of exponential growth.
% i.e., 
% there exist $c_1, c_2 >0$ such that 
% \[
% |f(\x)| < c_1 \exp(c_2 \|x\|), \qquad \forall x.
%\]
% slowly increasing, i.e., 
% there exists a natural number $n$ and positive number $r>0$ such that 
% $$
% |f(\t)| < (1+\|\t\|^2)^n \quad (\text{on}\quad \|\t \|=\t_1^2+\cdots+\t_\d^2 > r).
% $$

To state the main theorem of this section, we show a general lemma.
The notation $f,g,x,y$ in the following lemma is generic and not related to 
\eqref{eq:notation}.

\begin{lemma}
\label{lem:integration_ideal}
Let $D_x$ (resp.\ $D_{xy}$) be the ring of differential operators with
 polynomial coefficient 
 ${\bf C}\langle x_i,\pd{x_i}:1\leq i\leq n\rangle$
 (resp.\ ${\bf C}\langle x_i,y_j, \pd{x_i},\pd{y_j}:1\leq i\le n, 1\le j\le m\rangle$).
Suppose that a holonomic ideal $I$ annihilates a function $f(x,y)$ on 
${\bf R}^n\times{\bf R}^m$ 
and the function $f(x,y)$ is rapidly decreasing with respect to the
 variable $y$ for any $x$ in an open set $O\subset{\bf R}^n$.
Then the integration ideal of $I$ with respect to the variable $y$
annihilates 
\begin{equation}
\label{fn:g-func} 
g(x):= \int_{{\bf R}^m} f(x,y)dy
\end{equation}
defined on the open set $O$.
% annihilates $g(x)$.
\end{lemma}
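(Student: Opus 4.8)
The plan is to recall the standard description of the integration ideal and then verify the required annihilation property by an integration-by-parts argument, using the rapid decay in $y$ to kill the boundary terms. Concretely, the integration ideal of $I$ with respect to $y$ is usually defined as
\[
\left( I + \pd{y_1} D_{xy} + \dots + \pd{y_m} D_{xy}\right) \cap D_x,
\]
i.e.\ the set of operators in $D_x$ that can be written, modulo $I$, as a sum $\sum_j \pd{y_j} P_j$ with $P_j \in D_{xy}$. So let $Q \in D_x$ be an element of the integration ideal; I must show $Q \bullet g(x) = 0$ on $O$. By definition there exist $P_j \in D_{xy}$ and an operator in $I$ so that, as operators on functions annihilated by $I$,
\[
Q = \sum_{j=1}^m \pd{y_j} P_j \pmod{I}.
\]

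First I would apply $Q$ to $g(x) = \int_{{\bf R}^m} f(x,y)\, dy$ and pull the differential operator $Q$ (which involves only $x$-variables and $\pd{x_i}$) inside the integral; this is legitimate because $f$ is smooth and rapidly decreasing in $y$, so the integral and its $x$-derivatives converge locally uniformly on $O$ and differentiation under the integral sign is valid. This gives $Q \bullet g(x) = \int_{{\bf R}^m} (Q \bullet f)(x,y)\, dy$. Next, since $I$ annihilates $f$, I may replace $Q \bullet f$ by $\sum_j \pd{y_j}(P_j \bullet f)$, so that
\[
Q \bullet g(x) = \sum_{j=1}^m \int_{{\bf R}^m} \pd{y_j}\bigl(P_j \bullet f\bigr)(x,y)\, dy.
\]
Each summand is the integral of a total $y_j$-derivative. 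The function $P_j \bullet f$ is again smooth and rapidly decreasing in $y$ for $x \in O$ (applying a differential operator with polynomial coefficients to a rapidly decreasing function preserves rapid decrease in each variable), so by the fundamental theorem of calculus in the $y_j$ direction and Fubini, $\int_{{\bf R}^m} \pd{y_j}(P_j \bullet f)\, dy = 0$. Summing over $j$ gives $Q \bullet g(x) = 0$ on $O$, which is the claim.

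The main obstacle I expect is purely analytic rather than algebraic: justifying the interchange of $Q$ (a polynomial-coefficient differential operator in $x$) with the $y$-integral, and checking that "rapidly decreasing in $y$" is inherited by $P_j \bullet f$ uniformly enough that the boundary terms vanish. The polynomial coefficients in $x$ are harmless since $x$ is fixed during the $y$-integration, but one should be careful that differentiating $g$ in $x$ really does commute with $\int dy$ on all of $O$; this follows from the hypothesis that $f$ is rapidly decreasing in $y$ for every $x\in O$ together with local boundedness in $x$, which I would state explicitly. There is also a minor bookkeeping point that the operator identity $Q \equiv \sum_j \pd{y_j}P_j \pmod I$ must be applied to the specific function $f$, i.e.\ one uses that $I \bullet f = 0$ to discard the $I$-part; this is immediate from the definition of the integration ideal and the hypothesis on $I$.
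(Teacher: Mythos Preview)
Your proof is correct and follows essentially the same route as the paper's own argument: write an element of the integration ideal as $P_0+\sum_j \pd{y_j}P_j$ with $P_0\in I$, pull the $D_x$-operator through the integral, use $P_0\bullet f=0$, and kill each $\int \pd{y_j}(P_j\bullet f)\,dy$ by rapid decrease. If anything, you are more careful than the paper about the analytic justifications (differentiation under the integral sign and stability of rapid decrease under $D_{xy}$), which the paper dispatches with a one-line appeal to dominated convergence.
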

\begin{proof}
Since the function $f(x,y)$ is rapidly decreasing, the integral of
 $(\ref{fn:g-func})$ converges. %is a finite value.
Let $P$ be in the integration ideal $J=\left(I+\sum_{j=1}^m
 \pd{y_j}D_{x,y}\right)\cap D_x $, 
then we have
$
P\bullet g = \int P\bullet f dy
$
by the Lebesgue convergence theorem.
Since the differential operator $P$ can be written as 
$$
P = P_0 + \sum_{i=1}^m \pd{y_i}P_i \in D_x\quad (P_0\in I, \, P_i \in D_{xy}),
$$
we have
$
\int P\bullet f dy = \sum \int \pd{y_i}P_i\bullet f dy.
$
Since $P_i\bullet f$ is rapidly decreasing, we have
$\int \pd{y_i}P_i\bullet f dy = 0$.
\end{proof}

We now go back to $g(x,y)$ in \eqref{eq:general_g}
and use the notation in \eqref{eq:notation-for-rings} and \eqref{eq:notation}.
Consider a ${\bf C}$-algebra morphism $\varphi$ from
%a Weyl algebra
%$D_\t := {\bf C}\langle \t_i, \pd{\t_i}: 1\leq i \leq \d\rangle$
$D_\t$ 
to 
% a Weyl algebra
% $D_{\x\y} := {\bf C}\langle \x_{ij},\y_i, \pd{\x_{ij}}, \pd{\y_i}
% : 1\leq i \leq j \leq \d\rangle$
$D_{\x\y}$ 
defined by 
\[
\varphi:D_t\rightarrow D_{\x\y}\qquad
\left(
\t_i \mapsto \pd{\y_i},\,
\pd{\t_i}\mapsto -\y_i-2\sum_{k=1}^\d \x_{ik}\pd{\y_k}
\right).
\]
Here, we assume $\x_{ij}=\x_{ji},\, \pd{\x_{ij}}=\pd{\x_{ji}}$.
Since 
$
[\varphi(\pd{\t_i}), \varphi(\pd{\t_j})]
:=\varphi(\pd{\t_i})\varphi(\t_j)-\varphi(\t_j)\varphi(\pd{\t_i})
= \delta_{ij}
$,
where $\delta_{ij}$ is Kronecker's delta, 
$\varphi$ is well-defined as a morphism of ${\bf C}$-algebra.

Now we state the main theorem in this section.
\begin{theorem}\label{thm:E-func}
Suppose a function $f$ on ${\bf R}^\d$ is smooth and of exponential growth.
If differential operators $P_1,\dots, P_s\in D_\t$ annihilate $f$,
then the following differential operators annihilate 
the integral $g(\x, \y)$ in $(\ref{eq:general_g})$.
\begin{eqnarray}
\varphi(P_k) &\quad&(1\leq k \leq s) \label{op:ann-E-1}, \\
\pd{\x_{ij}}-2\pd{\y_i}\pd{\y_j} &\quad&(1\leq i<j\leq \d) \label{op:ann-E-2},\\
\pd{\x_{ii}}-\pd{\y_i}^2 &\quad& (1\leq i \leq \d). \label{op:ann-E-3}
\end{eqnarray}
Moreover, 
the differential operators $(\ref{op:ann-E-1}),(\ref{op:ann-E-2}),(\ref{op:ann-E-3})$
generate a holonomic ideal in $D_{\x \y}$ 
if the differential operators $P_1, \dots, P_s$ generate a holonomic ideal in $D_\t$.
\end{theorem}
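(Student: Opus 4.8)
First I would prove that the listed operators annihilate $g$. The crux is the identity
\[
\int_{{\bf R}^\d}\left(P\bullet f\right)(t)\,e^{h(\x,\y,t)}\,dt \;=\; \varphi(P)\bullet g(\x,\y)
\qquad\text{for all }P\in D_\t,
\]
valid wherever $-\x$ is positive definite, so that $e^{h}$ decays faster than any exponential and dominates the growth of $f$ and of the functions $Q\bullet f$. I would establish it by induction on the length of $P$ as a word in $\t_i,\pd{\t_i}$: the case $P=1$ is trivial; for $P=\t_iQ$ one writes $\t_i e^{h}=\pd{\y_i}e^{h}$ and differentiates under the integral; for $P=\pd{\t_i}Q$ one integrates by parts in $\t_i$ (the boundary terms vanish) and uses $\pd{\t_i}e^{h}=\bigl(2\sum_k\x_{ik}\t_k+\y_i\bigr)e^{h}$ together with $\t_k e^{h}=\pd{\y_k}e^{h}$. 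That the resulting expressions reassemble as $\varphi(P)\bullet g$ is exactly the statement, already established, that $\varphi$ is a ${\bf C}$-algebra morphism. Taking $P=P_k$ gives $\varphi(P_k)\bullet g=0$, i.e.\ $(\ref{op:ann-E-1})$; the operators $(\ref{op:ann-E-2})$ and $(\ref{op:ann-E-3})$ follow by differentiating under the integral directly, using $\x_{ij}=\x_{ji}$ to get $\pd{\x_{ij}}e^{h}=2\t_i\t_j e^{h}=2\pd{\y_i}\pd{\y_j}e^{h}$ and $\pd{\x_{ii}}e^{h}=\t_i^2 e^{h}=\pd{\y_i}^2 e^{h}$, while $f$ is independent of $\x,\y$.

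For holonomicity, write $I$ for the left ideal generated by $(\ref{op:ann-E-1})$, $(\ref{op:ann-E-2})$, $(\ref{op:ann-E-3})$, and let $D_{\x\t}$ denote the Weyl algebra in the variables $\x_{ij},\t_k$. Let $\Psi:D_{\x\t}\to D_{\x\y}$ be the ${\bf C}$-algebra isomorphism fixing $\x_{ij}$ and $\pd{\x_{ij}}$ and sending $\t_i\mapsto\pd{\y_i}$, $\pd{\t_i}\mapsto-\y_i$ (the partial Fourier transform in $\t$); since it carries the Bernstein filtration of $D_{\x\t}$ onto that of $D_{\x\y}$, it induces an isomorphism of the associated graded rings and hence preserves holonomicity. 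Let $R_k$ be obtained from $P_k$ by replacing every $\pd{\t_i}$ by $\pd{\t_i}-2\sum_j\x_{ij}\t_j$; this is conjugation of $P_k$ by $\exp\bigl(\sum_{ij}\x_{ij}\t_i\t_j\bigr)$, and it defines an algebra automorphism $c$ of the subring $D'':={\bf C}\langle\x_{ij},\t_k,\pd{\t_k}\rangle$ of $D_{\x\t}$. Let $K\subset D_{\x\t}$ be the left ideal generated by $\pd{\x_{ij}}-2\t_i\t_j$ $(i<j)$, $\pd{\x_{ii}}-\t_i^2$, and the $R_k$. Checking on generators gives $\Psi(K)=I$, so it suffices to prove $K$ is holonomic in $D_{\x\t}$.

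To see this, use $\pd{\x_{ij}}-2\t_i\t_j$ and $\pd{\x_{ii}}-\t_i^2$ to eliminate the $\pd{\x_{ij}}$ modulo $K$: every element of $D_{\x\t}$ then reduces to an element of $D''$, so $D_{\x\t}/K\cong D''/(K\cap D'')$ as $D''$-modules, the reduction at most doubling the Bernstein order. Now $K\cap D''$ contains $c(L)$, where $L$ is the ideal of $D''$ generated by the $P_k$, and both $c$ and $c^{-1}$ distort the Bernstein order by at most a constant factor; hence, up to such rescalings, $F_\ell(D_{\x\t}/K)$ grows no faster than $D''/L=(D_\t/\langle P_1,\dots,P_s\rangle)\otimes_{{\bf C}}{\bf C}[\x_{ij}]$. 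By hypothesis $D_\t/\langle P_1,\dots,P_s\rangle$ is holonomic, of Bernstein dimension $\d$; tensoring with the polynomial ring in the $\binom{\d+1}{2}$ variables $\x_{ij}$ raises this to $\d+\binom{\d+1}{2}=N$, the number of variables of $D_{\x\t}$. Chasing the bounded rescalings back through the reductions yields $\dim F_\ell(D_{\x\t}/K)=O(\ell^{N})$, and since $D_{\x\t}/K\ne 0$ (it annihilates the nonzero function $f(\t)\exp(\sum_{ij}\x_{ij}\t_i\t_j)$, so $1\notin K$), Bernstein's inequality forces $D_{\x\t}/K$ to be holonomic. Transporting along $\Psi$ shows $D_{\x\y}/I$ is holonomic.

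The step I expect to be the main obstacle is this last one: it does not suffice to know that $g$ is a holonomic function, since what must be shown is that the \emph{particular} operators $(\ref{op:ann-E-1})$--$(\ref{op:ann-E-3})$ already bring the Bernstein dimension down to $N$. The delicate point is to keep track of how the elimination of the $\pd{\x}$'s and the exponential conjugation distort the Bernstein filtration, and then to check that after these distortions the dimension count still comes out to exactly $N$. A secondary point requiring care lies in the first part: the analytic justification of differentiation under the integral and of the vanishing of boundary terms relies on $-\x$ being positive definite and on the growth hypotheses on $f$.
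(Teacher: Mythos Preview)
Your argument is correct but follows a genuinely different route from the paper's.

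For the annihilation part, the paper works indirectly: it first produces a holonomic annihilating ideal for $f(t)e^{h(x,y,t)}$ in $D_{xyt}$ (by adjoining $\pd{x_{ij}},\pd{y_i}$ to the $P_\ell$ and then twisting by $e^h$), and then proves via two technical lemmas on normal forms that the operators $(\ref{op:ann-E-1})$--$(\ref{op:ann-E-3})$ generate precisely the \emph{integration ideal} $J=(I+\sum_i\pd{t_i}D_{xyt})\cap D_{xy}$ of that ideal; Lemma~\ref{lem:integration_ideal} then gives $J\bullet g=0$. Your direct verification of the identity $\int_{{\bf R}^d}(P\bullet f)\,e^{h}\,dt=\varphi(P)\bullet g$ by peeling letters off $P$ is shorter and bypasses all of this.

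For holonomicity, the paper simply invokes the standard theorem (cited from Bj\"ork) that the integration ideal of a holonomic ideal is again holonomic, having already identified the ideal generated by $(\ref{op:ann-E-1})$--$(\ref{op:ann-E-3})$ as an integration ideal. Your approach---partial Fourier transform $\Psi$ to $D_{xt}$, elimination of the $\pd{x}$'s modulo $K$, exponential conjugation $c$, and a direct Bernstein-filtration count against $(D_t/\langle P_k\rangle)\otimes{\bf C}[x]$---works, and the filtration distortions you flag as delicate are indeed bounded: the elimination step sends $F_\ell$ into the image of $F_{2\ell}\cap D''$ (each $\pd{x_{ij}}$ of order~$1$ becomes a $t$-monomial of order~$2$, and the $\pd{x}$'s can be commuted to the far right first), while $c$ and $c^{-1}$ each at most double the order. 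Composing gives $\dim F_\ell(D_{xt}/K)\le\dim F_{4\ell}(D''/L)=O(\ell^{N})$, which is exactly the holonomicity bound.

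The trade-off: the paper's route yields the stronger statement that $(\ref{op:ann-E-1})$--$(\ref{op:ann-E-3})$ generate the \emph{full} integration ideal (not merely some holonomic ideal containing them), at the cost of the two combinatorial normal-form lemmas; your route proves only what the theorem asserts but is self-contained, avoiding both those lemmas and the black-box from Bj\"ork.
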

\begin{proof}
% Let 
% $$
% h(\x,\y,\t) 
% = \sum_{i, j=1}^\d\x_{ij}\t_i\t_j +\sum_{i=1}^{\d}\y_i\t_i
% = \sum_{i=1}^{\d}\y_i\t_i  
%  +\sum_{i=1}^\d\x_{ii}\t_i^2 
%  +2\sum_{1\leq i< j \leq \d}\x_{ij}\t_i\t_j
% $$
% and 
% $D_{\x \y \t}
% =
% {\bf C}\langle \t_i,\x_{ij}, \y_i, \pd{\t_i}, \pd{\x_{ij}},
%  \pd{\y_i}\rangle
% .$
% In order to write expressions concisely, we use the following notation. 
For a differential operator
$$
P
=\sum_{\alpha,\beta} c_{\alpha,\beta}
 \t_1^{\alpha_1}\cdots \t_\d^{\alpha_\d}
 \pd{\t_1}^{\beta_1}\cdots \pd{\t_\d}^{\beta_\d}
\in D_\t 
$$
and 
$p_i, q_i \in D_{\x \y \t}\,(i=1,\dots, \d)$,
we put
\begin{equation}\label{abbreviation}
P(p_i;q_i)=
\sum_{\alpha,\beta} c_{\alpha,\beta}
 p_1^{\alpha_1}\cdots p_\d^{\alpha_\d}
 q_1^{\beta_1}\cdots q_\d^{\beta_\d}.
\end{equation}
By the assumption, the differential operators
\[
P_\ell \, (\ell = 1,\dots, s),\quad
\pd{\x_{ij}} \, (1\leq i\leq j \leq \d),\quad
%\\&&
\pd{\y_i} \, (1\leq i \leq \d)
\]
annihilate  $f$  and generate a holonomic ideal in 
$D_{\x \y \t}.$
By the lemma introduced in \cite[Section 3.3]{OST}, the differential operators
\[
P_\ell(\t_i;\pd{\t_i}-\frac{\partial h}{\partial \t_i})\, (\ell = 1,\dots,
s),\quad
\pd{\x_{ij}}-\frac{\partial h}{\partial \x_{ij}} \, (1\leq i\leq j \leq \d),\quad
\pd{\y_i}-\frac{\partial h}{\partial \y_i} \, (1\leq i \leq \d)
\]
annihilate $\exp(h)f$ and generate holonomic ideal $I$ in
$D_{\x \y \t}$.
As we will show in Lemma  \ref{lem:genJ} below,
the differential operators
 $(\ref{op:ann-E-1}),(\ref{op:ann-E-2}),(\ref{op:ann-E-3})$ 
 generate the integration ideal $J$ of $I$ with respect to the variables 
 $\t_i\,(i=1,\dots, \d)$.

Since the function $\exp\left( h \right)f$ is rapidly decreasing, the
 integration ideal $J$ annihilates the integral $g(\x, \y)$ by Lemma
 $\ref{lem:integration_ideal}$. Hence, the differential operators
 $(\ref{op:ann-E-1})$,$(\ref{op:ann-E-2})$,\\
 $(\ref{op:ann-E-3})$ annihilate
 $g(\x,\y)$. 

Since the integration ideal of a holonomic ideal is also holonomic
(see, e.g., \cite[Chap 1]{Bjork}),
the ideal $J$ is holonomic when the ideal $I$ is holonomic.
\end{proof}

Now, we show that the integration ideal $J$ is generated by the
 differential operators 
 $(\ref{op:ann-E-1}),(\ref{op:ann-E-2}),(\ref{op:ann-E-3})$ in the following two lemmas.
We denote $P\equiv Q$ when 
$
P-Q \in  
\sum_{i=1}^\d D_{\x \y \t}(\pd{\y_i}-\t_i).
$
%To prove the statement, we show the following lemma.
\begin{lemma}
 \label{lem:mod1}
Let % We put 
$
p_i =  \pd{\t_i} -\y_i-2\sum_{k=i}^\d\x_{ik}\t_i
$
and 
$
q_i =  \pd{\t_i} -\y_i-2\sum_{k=i}^\d\x_{ik}\pd{\y_k}.
$
%to write shortly.
For a differential operator %$P(\t_i;\pd{\t_i}) \in D_\t$ and 
$P \in D_\t$ and 
a multi index $\alpha \in {\bf N}_0^\d=\{0,1,2\dots\}^\d$, 
the following equivalence relations hold.
\begin{eqnarray}
\label{eq:equiv1}
P(\t_i;p_i)&\equiv& P(\pd{\y_i};q_i),\\
\label{eq:equiv2}
\t^\alpha P(\pd{\y_i};q_i)
&\equiv&
\pd{\y}^\alpha P(\pd{\y_i};q_i).
\end{eqnarray}
Here, we use the notation in $(\ref{abbreviation})$.
\end{lemma}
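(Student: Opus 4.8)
The goal is to prove the two equivalences modulo the left ideal $\sum_{i=1}^\d D_{\x\y\t}(\pd{\y_i}-\t_i)$. The first equivalence $(\ref{eq:equiv1})$ is essentially a substitution principle: since $\t_i\equiv\pd{\y_i}$ and, after checking that $p_i\equiv q_i$, one wants to propagate these congruences through the polynomial expression $P(\t_i;p_i)$ term by term. The congruence $p_i\equiv q_i$ is immediate from the definitions, because $p_i-q_i=-2\sum_{k}\x_{ik}(\t_i-\pd{\y_k})$ — wait, one must be careful: $p_i$ has $\t_i$ where $q_i$ has $\pd{\y_k}$, so actually $p_i-q_i=-2\sum_{k=i}^\d\x_{ik}(\t_i-\pd{\y_k})$, and each summand lies in the ideal after reindexing. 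The substantive point is that congruence modulo this ideal is \emph{not} obviously preserved under left or right multiplication by arbitrary operators, since the ideal is only a left ideal; so one cannot simply say ``replace each factor.'' The key technical fact I would isolate first is a commutation lemma: for the relevant generators $\t_i$, $\pd{\y_i}$, $p_i$, $q_i$, the element $\pd{\y_j}-\t_j$ can be moved past them with only a correction term that again lies in the ideal. Concretely, $[\pd{t_i},\pd{y_j}-t_j]=-\delta_{ij}$ and $[\,y_i,\pd{y_j}-t_j\,]=0$, etc., so commuting $(\pd{\y_j}-\t_j)$ leftward past any of these generators produces at worst a scalar multiple of another such generator or of $1$; the $1$ terms are the ones that must be shown to cancel. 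This is exactly the mechanism by which $\varphi$ was shown well-defined in the excerpt (the bracket computation $[\varphi(\pd{t_i}),\varphi(t_j)]=\delta_{ij}$), and I would reuse that computation.

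**Carrying it out.**
First I would record the basic brackets of $\pd{\y_j}-\t_j$ against $\t_i$, $\pd{\y_i}$, $p_i$, $q_i$, showing in each case that
\[
A\,(\pd{\y_j}-\t_j) \;=\; (\pd{\y_j}-\t_j)\,A \;+\; (\text{element of the ideal})
\]
for $A$ any of these generators; the point is that the bracket $[A,\pd{\y_j}-\t_j]$ is either $0$ or (for $A=p_i$ or $q_i$) a constant times some $(\pd{\y_k}-\t_k)$ or a constant. Then, for $(\ref{eq:equiv1})$, I would write $P=\sum c_{\alpha\beta}\t^\alpha\pd{\t}^\beta$ and pass from $P(\t_i;p_i)$ to $P(\pd{\y_i};q_i)$ one monomial at a time: in the monomial $\t_1^{\alpha_1}\cdots p_\d^{\beta_\d}$ replace the factors left-to-right, each replacement $\t_i\leadsto\pd{\y_i}$ or $p_i\leadsto q_i$ introducing a factor $(\pd{\y_i}-\t_i)$ (resp.\ the ideal element $p_i-q_i$) times the remaining tail on the right; using the commutation lemma to sweep that ideal element out to the left shows the total error is in the ideal. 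For $(\ref{eq:equiv2})$ the argument is cleaner: $\t^\alpha-\pd{\y}^\alpha\in\sum_i D_{\x\y\t}(\pd{\y_i}-\t_i)$ already (telescoping $\t_1^{\alpha_1}\cdots - \pd{\y_1}^{\alpha_1}\cdots$ using $\t_i-\pd{\y_i}\equiv -( \pd{\y_i}-\t_i)$), and then one only needs that multiplying an ideal element on the \emph{right} by $P(\pd{\y_i};q_i)$ keeps it in the (left) ideal — which it does, since left ideals are closed under right-sided operations only after commuting; here again the commutation lemma handles moving $P(\pd{\y_i};q_i)$ past the single generator $(\pd{\y_i}-\t_i)$, the correction terms being ideal elements.

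**The main obstacle.**
The delicate step is the bookkeeping in $(\ref{eq:equiv1})$: because we work in a noncommutative ring modulo a merely left ideal, replacing a factor deep inside a monomial leaves an ideal element sitting in the \emph{middle} of a product, and one must commute it out — and the correction terms generated in doing so must themselves be controlled. I expect this to work precisely because $q_i$ was defined so that $[\,q_i,\pd{\y_j}-\t_j\,]$ lands back in the span of the $(\pd{\y_k}-\t_k)$ (this is the shadow of the relation $[\varphi(\pd{t_i}),\varphi(t_j)]=\delta_{ij}$ verified earlier), so the recursion closes. A secondary, purely clerical hazard is the apparent typo in the statement's definition of $p_i$ and $q_i$ — the summations read $\sum_{k=i}^\d\x_{ik}\t_i$ and $\sum_{k=i}^\d\x_{ik}\pd{\y_k}$, and one should read the first as $\sum_{k=i}^\d\x_{ik}\t_k$ so that $p_i$ is the conjugate of $\pd{t_i}$ by $\exp(h)$ restricted to the ``upper'' part; I would silently normalize this at the start of the proof and note that with the intended reading $p_i-q_i=-2\sum_{k=i}^\d\x_{ik}(\t_k-\pd{\y_k})\in\sum_j D_{\x\y\t}(\pd{\y_j}-\t_j)$, which is what makes the base case of each replacement step go through.
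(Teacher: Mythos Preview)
Your approach is correct and is genuinely different from the paper's. The paper's proof establishes the commutator identities $[p_i,q_j]=0$, $[p_i,p_j]=[q_i,q_j]=0$, and $[q_i,t_j]=[q_i,\partial_{y_j}]=\delta_{ij}$, then builds up \eqref{eq:equiv8} by a nested induction: first proving $t_j q_i^{\beta_i}\equiv \partial_{y_j}q_i^{\beta_i}$ by induction on $\beta_i$, then $p^\beta\equiv q^\beta$, then the full monomial identity by induction on $\alpha$. Your route instead pivots on the relation between the ideal generators $\partial_{y_j}-t_j$ and the operators $t_i,\partial_{y_i},p_i,q_i$. This is a clean organizing principle, because in fact $[\partial_{y_j}-t_j,A]=0$ for every $A\in\{t_i,\partial_{y_i},p_i,q_i\}$ (the $-\delta_{ij}$ from $[\partial_{t_i},-t_j]$ cancels the $+\delta_{ij}$ from $[-y_i,\partial_{y_j}]$), so the equivalence $\equiv$ is a two-sided congruence on the subalgebra generated by these elements, and the substitution argument goes through without any bookkeeping. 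Your write-up does not quite reach this conclusion: you allow the possibility that $[A,\partial_{y_j}-t_j]$ is ``a constant,'' which if it occurred would genuinely break the argument (a nonzero scalar is not in the left ideal, and the ``recursion'' would not close). You should compute these four brackets explicitly and record that they vanish; once that is done your proof is shorter and more conceptual than the paper's induction. Two small clerical points: the ideal element produced by a left-to-right replacement must be swept to the \emph{right} (not the left) to exhibit membership in the left ideal $\sum D_{\x\y\t}(\partial_{y_j}-t_j)$; and your reading of the typo is correct --- the intended $p_i$ has $t_k$, not $t_i$, in the sum.
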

\begin{proof}
By the straightforward calculation, we have the following relations
for $1\leq i, j \leq \d$.
\begin{eqnarray}
\label{eq:equiv3}
&&
[p_i, q_j] =  0, \\
&&
\label{eq:equiv4}
[p_i, p_j] = [q_i, q_j] = 0, \\ 
&&
\label{eq:equiv5}
[q_i, \t_j] = [q_i,\pd{\y_j}]=\delta_{ij}.
%\\&&
%\quad \quad (1\leq i, j \leq \d) \nonumber
\end{eqnarray}
% Here, $\delta_{ij}$ is the Kronecker's delta and %bracket  means 
% $[p,q]= pq-qp$.

In order to prove $(\ref{eq:equiv1})$,
it is sufficient to show %the relation
\begin{equation}
\label{eq:equiv8}
\t_1^{\alpha_1}\cdots \t_\d^{\alpha_\d}p_1^{\beta_1} \cdots  p_\d^{\beta_\d} 
\equiv
\pd{\y_1}^{\alpha_1}\cdots \pd{\y_\d}^{\alpha_\d}q_1^{\beta_1}
\cdots  q_\d^{\beta_\d} 
\quad (\alpha,\beta \in {\bf N}_0^\d).
\end{equation}
%where ${\bf N}_0=\{0,1,2,\dots\}$. 

By the induction on $\beta_i$, we  have a relation
\begin{equation}
\label{eq:equiv6}
\t_jq_i^{\beta_i}
\equiv
\pd{\y_j}q_i^{\beta_i}
\quad (1\leq i, j \leq \d, \quad  \beta_i \in {\bf N}_0).
\end{equation}
When $\beta_i=0$, the relation $(\ref{eq:equiv6})$ holds
 clearly.
Suppose that the relation $(\ref{eq:equiv6})$ holds for $\beta_i$. Then we
 have 
\begin{eqnarray*}
 \t_jq_i^{\beta_i+1}
     &=&  \t_jq_iq_i^{\beta_i}
      =  (q_i\t_j-\delta_{ij})q_i^{\beta_i}\\ 
&\equiv& (q_i\pd{\y_j}-\delta_{ij})q_i^{\beta_i} 
     =  \pd{\y_j}q_iq_i^{\beta_i}
     =  \pd{\y_j}q_i^{\beta_i+1}.
\end{eqnarray*}
By induction, the relation $(\ref{eq:equiv6})$ holds for any $\beta_i$.

By the relations $(\ref{eq:equiv3})$ and $(\ref{eq:equiv6})$,
we have 
\begin{equation}
\label{eq:equiv7}
p_1^{\beta_1} \cdots  p_\d^{\beta_\d}
\equiv
q_1^{\beta_1} \cdots  q_\d^{\beta_\d}
\quad (\beta_i \in {\bf N}_0).
\end{equation}
Now we prove the relation $(\ref{eq:equiv8})$
by induction on the multi index $\alpha\in {\bf N}_0^\d$.
When $\alpha = 0$, the relation $(\ref{eq:equiv8})$ holds because of
 $(\ref{eq:equiv7})$.
Suppose the relation $(\ref{eq:equiv8})$ holds for $\alpha$, then we have
\begin{align*}
\t_i\t_1^{\alpha_1}\cdots \t_\d^{\alpha_\d}p_1^{\beta_1} \cdots  p_\d^{\beta_\d}
&\equiv
\t_i\pd{\y_1}^{\alpha_1}\cdots \pd{\y_\d}^{\alpha_\d}q_1^{\beta_1}
\cdots  q_\d^{\beta_\d} 
%(\text{by the assumption of induction})
\\
&=
\pd{\y_1}^{\alpha_1}\cdots \pd{\y_\d}^{\alpha_\d}
(\prod_{j\neq i}q_j^{\beta_j})\t_iq_i^{\beta_i}
\quad(\text{by $(\ref{eq:equiv4})$ and $(\ref{eq:equiv5})$})\\
&\equiv
\pd{\y_1}^{\alpha_1}\cdots \pd{\y_\d}^{\alpha_\d}
(\prod_{j\neq i}q_j^{\beta_j})\pd{\y_i}q_i^{\beta_i} 
\quad(\text{by $(\ref{eq:equiv6})$})\\
&\equiv
\pd{\y_i}\pd{\y_1}^{\alpha_1}\cdots \pd{\y_\d}^{\alpha_\d}q_1^{\beta_1}
\cdots  q_\d^{\beta_\d}
\quad(\text{by $(\ref{eq:equiv4})$ and $(\ref{eq:equiv5})$}).
\end{align*}
By induction, the relation $(\ref{eq:equiv1})$ holds for any $\alpha$.

Finally, the relation $(\ref{eq:equiv2})$ holds by
 $(\ref{eq:equiv4}),(\ref{eq:equiv5})$ and
$(\ref{eq:equiv6})$.
\end{proof}

\begin{lemma}
 \label{lem:genJ}
 The integration ideal $J$ is generated by the differential operators\\
 $(\ref{op:ann-E-1}),(\ref{op:ann-E-2}),(\ref{op:ann-E-3})$.
\end{lemma}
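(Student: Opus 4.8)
The plan is to prove $J=J'$, where $J'$ denotes the left ideal of $D_{\x\y}$ generated by $(\ref{op:ann-E-1})$, $(\ref{op:ann-E-2})$, $(\ref{op:ann-E-3})$. Recall from the proof of Theorem~\ref{thm:E-func} that $I$ is generated over $D_{\x\y\t}$ by the operators $P_\ell(\t_i;p_i)$ ($\ell=1,\dots,s$), $\pd{\x_{ij}}-\frac{\partial h}{\partial\x_{ij}}$ ($1\le i\le j\le\d$) and $\pd{\y_i}-\t_i$ ($1\le i\le\d$), with $p_i=\pd{\t_i}-\frac{\partial h}{\partial\t_i}$ as in Lemma~\ref{lem:mod1}, and that $J=\bigl(I+\sum_i\pd{\t_i}D_{\x\y\t}\bigr)\cap D_{\x\y}$.

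First I would check $J'\subseteq J$. For $(\ref{op:ann-E-2})$ and $(\ref{op:ann-E-3})$ this is immediate from the identity (for $i<j$)
\[
\pd{\x_{ij}}-2\pd{\y_i}\pd{\y_j}
=\Bigl(\pd{\x_{ij}}-\tfrac{\partial h}{\partial\x_{ij}}\Bigr)-2\pd{\y_i}(\pd{\y_j}-\t_j)-2\t_j(\pd{\y_i}-\t_i)
\]
and its analogue for $i=j$: since $I$ is a left ideal the right-hand side lies in $I$, hence in $J$. For $(\ref{op:ann-E-1})$, write $\varphi(P_k)=P_k(\pd{\y_i};q_i-\pd{\t_i})$ in the notation of $(\ref{abbreviation})$; since the $q_i$ commute with one another and with every $\pd{\t_j}$, expanding the powers of $q_i-\pd{\t_i}$ and moving each factor $\pd{\t_i}$ to the left yields $\varphi(P_k)\equiv P_k(\pd{\y_i};q_i)\pmod{\sum_i\pd{\t_i}D_{\x\y\t}}$. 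Combining this with $(\ref{eq:equiv1})$ of Lemma~\ref{lem:mod1}, and using $\sum_iD_{\x\y\t}(\pd{\y_i}-\t_i)\subseteq I$ and $P_k(\t_i;p_i)\in I$, gives $\varphi(P_k)\in I+\sum_i\pd{\t_i}D_{\x\y\t}$, so $\varphi(P_k)\in J$.

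For the reverse inclusion $J\subseteq J'$ the idea is to build an algebraic analogue of ``integrate $v\bullet(\exp(h)f)$ over $\t$''. Using the left $D_{\x\y}$-module decomposition $D_{\x\y\t}=\bigoplus_{a,b\in{\bf N}_0^\d}D_{\x\y}\,\t^a\pd{\t}^b$, define the $D_{\x\y}$-linear map $\Omega\colon D_{\x\y\t}\to D_{\x\y}$ by
\[
\Omega(\t^a\pd{\t}^b)=(-1)^{|b|}\,\bigl(\partial_\t^b\,\t^a\bigr)\big|_{\t_i\mapsto\pd{\y_i}},
\]
so that $\Omega$ is the identity on $D_{\x\y}$ and $\Omega(\t^a\pd{\t}^b)$ is always a polynomial in the $\pd{\y_i}$. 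I would then verify, for every $v\in D_{\x\y\t}$, the identities
\[
\Omega(\pd{\t_i}v)=0,\qquad
\Omega(v\,\t_i)=\Omega(v\,\pd{\y_i})=\Omega(v)\,\pd{\y_i},
\]
\[
\Omega(v\,\pd{\x_{ij}})=\Omega(v)\,\pd{\x_{ij}},\qquad
\Omega(v\,p_i)=\Omega(v)\,\varphi(\pd{\t_i}).
\]
The first four follow directly from the defining formula (they reduce to elementary identities for $\partial_\t^b\t^a$ and to the fact that a polynomial in the $\pd{\y_i}$ commutes with $\pd{\y_i}$ and with $\pd{\x_{ij}}$); the last is an ``integration by parts'' identity, obtained from the others together with the commutation rule $\y_i\,\pd{\y}^{\gamma}=\pd{\y}^{\gamma}\y_i-\gamma_i\,\pd{\y}^{\gamma-e_i}$.

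Granting these, $\Omega$ annihilates $\sum_i\pd{\t_i}D_{\x\y\t}$ and $D_{\x\y\t}(\pd{\y_i}-\t_i)$; it sends $v\bigl(\pd{\x_{ij}}-\frac{\partial h}{\partial\x_{ij}}\bigr)$ to $\Omega(v)\bigl(\pd{\x_{ij}}-2\pd{\y_i}\pd{\y_j}\bigr)$ (and similarly for $i=j$), and, applying the $\t_i$- and $p_i$-identities factor by factor inside $P_k(\t_i;p_i)=\sum_{\alpha,\beta}c_{\alpha\beta}\t^\alpha p^\beta$ (using that the $\pd{\y_i}$ commute, as do the $\varphi(\pd{\t_i})$), it sends $v\,P_k(\t_i;p_i)$ to $\Omega(v)\,\varphi(P_k)$. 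Hence $\Omega$ maps every generator of $I$, left-multiplied by an arbitrary element of $D_{\x\y\t}$, into $J'$, and therefore $\Omega\bigl(I+\sum_i\pd{\t_i}D_{\x\y\t}\bigr)\subseteq J'$. Since $\Omega$ is the identity on $D_{\x\y}$, any $w\in J=\bigl(I+\sum_i\pd{\t_i}D_{\x\y\t}\bigr)\cap D_{\x\y}$ satisfies $w=\Omega(w)\in J'$, so $J\subseteq J'$. The hard part will be the ``integration by parts'' identity $\Omega(v\,p_i)=\Omega(v)\,\varphi(\pd{\t_i})$: one must show that turning a trailing $\pd{\t_i}$ into $-\frac{\partial h}{\partial\t_i}$ on the source corresponds, after the substitution $\t_i\mapsto\pd{\y_i}$, precisely to right multiplication by $\varphi(\pd{\t_i})$ on the target, and this is where the commutators among the $\x$'s, $\y$'s and $\pd{\y_i}$'s have to be organized carefully; everything else is routine bookkeeping with the $D_{\x\y}$-module basis of $D_{\x\y\t}$.
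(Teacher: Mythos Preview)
Your argument is correct, and it is genuinely different from the paper's. For the inclusion $J'\subseteq J$ you and the paper agree in spirit (the paper just says ``clearly'', while you spell out the identities and invoke $(\ref{eq:equiv1})$). The divergence is in $J\subseteq J'$. The paper takes an element $P\in J$, writes $P=Q+\sum_i\pd{\t_i}R_i+\sum_iS_i(\pd{\y_i}-\t_i)$ with $Q\in I$, then uses the alternative generators $(\ref{op:integrand-1})$--$(\ref{op:integrand-4})$ of $I$ (obtained from Lemma~\ref{lem:mod1}) and the relation $(\ref{eq:equiv2})$ to normalize $Q$ until it visibly lies in $J'$; finally a term-order/Gr\"obner argument on $\{\t_i-\pd{\y_i}\}$ forces $P=Q$. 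Your route instead builds a left $D_{\x\y}$-linear retraction $\Omega:D_{\x\y\t}\to D_{\x\y}$ that algebraically mimics ``integrate against $\exp(h)$ in $\t$'', verifies it kills $\sum_i\pd{\t_i}D_{\x\y\t}$ and $\sum_iD_{\x\y\t}(\pd{\y_i}-\t_i)$, and sends left multiples of the remaining generators of $I$ into $J'$; since $\Omega$ is the identity on $D_{\x\y}$, this immediately gives $J\subseteq J'$. Your key identity $\Omega(v\,p_i)=\Omega(v)\,\varphi(\pd{\t_i})$ does hold: on a basis element $v=\t^a\pd{\t}^b$ it reduces to $\Omega(\t^a\pd{\t}^{b+e_i})=[\y_i,\Omega(\t^a\pd{\t}^b)]$, which is a one-line commutator computation, and the identity then extends by left $D_{\x\y}$-linearity because right multiplication by $\varphi(\pd{\t_i})\in D_{\x\y}$ is also left $D_{\x\y}$-linear. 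What your approach buys is conceptual cleanliness---no term orders, no case-by-case normalization of coefficients---at the price of having to check five right-multiplication identities for $\Omega$; what the paper's approach buys is that it reuses Lemma~\ref{lem:mod1} (in particular $(\ref{eq:equiv2})$) and standard Gr\"obner machinery, so it fits more directly into the $D$-module toolkit already set up.
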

\begin{proof}
Note that the ideal $I$ is generated by differential operators 
\begin{align*}
&P_\ell(
 \t_i;
 \pd{\t_i}
 -\y_i-2\sum_{k=i}^\d\x_{ik}\t_i
)
\quad (\ell = 1,\dots, s),
\\&
\pd{\x_{ij}}-2\t_i\t_j \quad (1\leq i< j \leq \d),
\\&
\pd{\x_{ii}}-\t_i^2,\,\, \pd{\y_i}-\t_i\quad (1\leq i\leq \d).
\end{align*}
By $(\ref{eq:equiv1})$, the ideal $I$ is generated by
\begin{align}
\label{op:integrand-1}
&
P_\ell(
 \pd{\y_i};
 \pd{\t_i}
 -\y_i-2\sum_{k=i}^\d\x_{ik}\pd{\y_k}
)
\quad (\ell = 1,\dots, s),
\\
\label{op:integrand-2}
&
\pd{\x_{ij}}-2\pd{\y_i}\pd{\y_j} \quad (1\leq i< j \leq \d),
\\
\label{op:integrand-3}
&
\pd{\x_{ii}}-\pd{\y_i}^2 \quad(1\leq i\leq \d),
\\
&
\label{op:integrand-4}
\pd{\y_i}-\t_i \quad(1\leq i\leq \d).
\end{align}

We denote by $\tilde{J}$ the left ideal generated by 
 $(\ref{op:ann-E-1}),(\ref{op:ann-E-2}),(\ref{op:ann-E-3})$.
Clearly we have $\tilde{J} \subset J$.
If $P$ is a differential operator in $J$, then $P$ can be written as 
\begin{equation}\label{P-1}
P = Q + \sum_{i=1}^d \pd{\t_i}R_i +\sum_{i=1}^d S_i\left(\pd{\y_i}-\t_i\right)
\quad (Q \in I, \, R_i,S_i \in D_{\x \y \t})
\end{equation}
by the definition of integration ideal.
The differential operator $Q$ is written as a linear combination of
 the differential operators (\ref{op:integrand-1})--(\ref{op:integrand-3})
 with $D_{\x \y \t}$ coefficients.
By the second term of the right-hand side of $(\ref{P-1})$, we can
 assume without loss of generality that the variables $\pd{\t_i}$ do not appear in these 
 coefficients.
By the relation $(\ref{eq:equiv2})$ in Lemma $\ref{lem:mod1}$,
we can assume that the coefficient in $Q$ is an element in $D_{\x \y}$. 
Since any differential operator in (\ref{op:integrand-1})--(\ref{op:integrand-3}) is an element of 
$
 \tilde{J} +\sum_{i=1}^\d\pd{\t_i}\cdot D_{\x \y \t},
$
we can assume $Q\in J$.

Consider the equation
%\begin{equation}\label{P-2
\[
P - Q -\sum_{i=1}^d  S_i(\pd{\y_i}-\t_i) =  \sum_{i=1}^d \pd{\t_i}R_i .
\]
%\end{equation}
We can assume that the variables $\pd{\t_1},\dots,\pd{\t_d}$ do not appear 
%in all $S_i$.
in $S_1,\dots,S_d$.
For example, if $t_1\pd{t_1}$ is a term of $S_2$ then we can
replace $S_2$ and $R_1$ to $S_2-t_1\pd{t_1}-1$ and $R_1-t_1(\pd{y_2}-t_2)$ respectively.
The term $t_1\pd{t_1}$ in $S_2$ is removed.
In the same way, we can remove all terms which include $\pd{t_i}$. 

Expanding both sides and comparing the coefficients of $\pd{\t_i}$, we have
%\begin{equation}\label{P-3}
\[
P - Q =\sum S_i(\pd{\y_i}-\t_i).
\]
%\end{equation}
The right-hand side of this equation is an element of the left ideal 
$J' :=\sum_{i=1}^\d D_{\x \y \t}\cdot(\pd{\y_i}-\t_i)$.
Let the weight of $\t_i$ be $1$ and that of other variables  $0$, 
and consider a term order $\prec$ with this weight.
The set 
$
\{\t_i-\pd{\y_i} | 1\leq i \leq \d\}
$ 
is a Gr\"{o}bner basis of $J'$ with the order,
so that the initial term of $P-Q$ has to divide some $\t_i$.
Since $P-Q$ is in $D_{\x \y}$, we have $P-Q = 0$. Thus $P \in J$. 
\end{proof}

%\subsection{the case of a distribution}
\subsection{The case of a non-smooth function}
Next we consider the case when $f(\t)$ is not smooth. 
In this case we consider $f(\t)$ as a distribution in the sense of Schwartz (\cite{Schwartz}).

Let $\Omega$ be a domain defined by
$$
\{ (\x, \y) | \text{$-\x$ is positive definite}\}.
$$
For a tempered distribution $f$ on ${\bf R}^\d$, we can define a
function on $\Omega$ as 
\begin{equation}
\label{fn:g-dist}
g(\x,\y) = \langle f, \exp\left(h(\t,\y,\x)\right)\rangle.
\end{equation}
Since $\exp\left(h(\t,\y,\x)\right)$ is rapidly decreasing with
respect to the variable $\t$ when $-\x$ is positive definite, 
the right-hand side of $(\ref{fn:g-dist})$ is finite. %a finite value.

A holonomic system for $(\ref{fn:g-dist})$ is given as follows.
\begin{theorem}\label{thm:E-dist}
If differential operators $P_1,\dots, P_s\in D_\t$ annihilate a tempered
 distribution $f$ on ${\bf R}^\d$,
then the differential operators
 $(\ref{op:ann-E-1}),(\ref{op:ann-E-2}),(\ref{op:ann-E-3})$ annihilate
 the function $g(\x, \y)$ in $(\ref{fn:g-dist})$. 
%\begin{eqnarray}
%\varphi(P_k), &\quad&(1\leq k \leq s) \label{op:ann-E-1}\\
%\pd{\x_{ij}}-2\pd{\y_i}\pd{\y_j}, &\quad&(1\leq i<j\leq \d) \label{op:ann-E-2}\\
%\pd{\x_{ii}}-\pd{\y_i}^2, &\quad& (1\leq i \leq \d) \label{op:ann-E-3}
%\end{eqnarray}
Moreover, the differential operators
 $(\ref{op:ann-E-1}),(\ref{op:ann-E-2}),(\ref{op:ann-E-3})$ generate a
 holonomic ideal in 
 $D_{\x \y}$ if the differential operators $P_1, \dots, P_s$
 generate a holonomic ideal in $D_\t$.
\end{theorem}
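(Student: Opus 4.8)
The plan is to run the proof of Theorem~\ref{thm:E-func} essentially verbatim, replacing the integral over $\t_1,\dots,\t_\d$ by the distributional pairing $\langle f,\cdot\rangle_\t$; the point is that only one step of that proof uses analysis, and it is precisely that step which needs a new argument. The algebraic part goes through unchanged. If $P_1,\dots,P_s$, together with $\pd{\x_{ij}}$ and $\pd{\y_i}$ (which annihilate $f$ trivially, as $f$ does not involve $\x,\y$), generate a holonomic ideal in $D_{\x\y\t}$, then the lemma of \cite[Section 3.3]{OST} --- whose proof is a $D$-module computation and carries over to distributions --- shows that the $\exp(h)$-twists $P_\ell(\t_i;\pd{\t_i}-\partial h/\partial\t_i)$, $\pd{\x_{ij}}-2\t_i\t_j$, $\pd{\x_{ii}}-\t_i^2$, $\pd{\y_i}-\t_i$ generate a holonomic ideal $I\subset D_{\x\y\t}$ that annihilates the distribution $\exp(h)f$; here nothing changes in the distributional case because the twist is simply the conjugation $\exp(h)\pd{\t_i}\exp(-h)=\pd{\t_i}-\partial h/\partial\t_i$ and $\bigl(\exp(h)P_\ell\exp(-h)\bigr)\bullet\bigl(\exp(h)f\bigr)=\exp(h)\cdot\bigl(P_\ell\bullet f\bigr)$, which vanishes once $P_\ell\bullet f=0$. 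Lemmas~\ref{lem:mod1} and~\ref{lem:genJ} are purely ring-theoretic and may be quoted word for word, so the integration ideal $J$ of $I$ with respect to $\t_1,\dots,\t_\d$ is exactly the ideal generated by \eqref{op:ann-E-1}, \eqref{op:ann-E-2}, \eqref{op:ann-E-3}; and by \cite[Chap 1]{Bjork} $J$ is holonomic whenever $I$ is. Thus the holonomicity assertion is already in hand, and all that remains is to show that $J$ annihilates the function $g(\x,\y)$ of \eqref{fn:g-dist}, i.e.\ to supply the distributional counterpart of Lemma~\ref{lem:integration_ideal}.

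For that, I would argue as follows. On the open set $\Omega=\{(\x,\y):-\x\ \text{positive definite}\}$ set $\phi_{\x,\y}(\t):=\exp(h(\x,\y,\t))$. Since positive definiteness is an open condition, on a neighbourhood of any point of $\Omega$ these Gaussians and all of their $\t$- and $(\x,\y)$-derivatives are dominated by one fixed rapidly decreasing function; hence $(\x,\y)\mapsto\phi_{\x,\y}$ is a $C^\infty$ map into the Schwartz space of ${\bf R}^\d_\t$, so that $g=\langle f,\phi_{\x,\y}\rangle_\t$ is smooth on $\Omega$ and may be differentiated under the pairing. Writing $u:=\exp(h)f$ for the $\t$-distribution depending on the parameters $(\x,\y)\in\Omega$ --- the product of $\phi_{\x,\y}$, which is Schwartz in $\t$, with the tempered distribution $f$, hence ``rapidly decreasing in $\t$'' --- the pairing $\langle u,\psi\rangle_\t:=\langle f,\exp(h)\psi\rangle_\t$ is defined for every smooth $\psi$ of polynomial growth. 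In particular $g=\langle u,1\rangle_\t$, and differentiation under the pairing together with the fact that $\x,\y$-polynomials factor out of the $\t$-pairing gives $Q\bullet g=\langle Q\bullet u,1\rangle_\t$ for every $Q\in D_{\x\y}$. Now take $P\in J\subset D_{\x\y}$ and write, as in the definition of the integration ideal, $P=P_0+\sum_{i=1}^\d\pd{\t_i}P_i$ with $P_0\in I$ and $P_i\in D_{\x\y\t}$; regarding $P$ as an element of $D_{\x\y\t}$ we get $P\bullet u=P_0\bullet u+\sum_i\pd{\t_i}(P_i\bullet u)=\sum_i\pd{\t_i}(P_i\bullet u)$ because $I$ annihilates $u$, and each $P_i\bullet u$ is again rapidly decreasing in $\t$, so $\langle\pd{\t_i}(P_i\bullet u),1\rangle_\t=-\langle P_i\bullet u,\pd{\t_i}1\rangle_\t=0$. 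Hence $P\bullet g=\langle P\bullet u,1\rangle_\t=0$, which is the required analogue of Lemma~\ref{lem:integration_ideal} and finishes the argument.

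The only real obstacle is the analytic bookkeeping in the middle step: one must check that $(\x,\y)\mapsto\exp(h)$ is smooth into the Schwartz space \emph{locally uniformly} --- equivalently, that the relevant Schwartz seminorm bounds on the Gaussians and their derivatives hold uniformly on compact subsets of $\Omega$ --- so that differentiation under the pairing and the vanishing of the $\t$-``boundary'' terms are both legitimate. (A minor auxiliary point: if $P_\ell\bullet f=0$ is assumed only in $\mathcal{D}'({\bf R}^\d)$, one first notes that $P_\ell\bullet f$ is tempered and that $\mathcal{D}$ is dense in the Schwartz space, so the relation in fact holds in $\mathcal{S}'({\bf R}^\d)$ and may be paired against $\exp(h)\notin\mathcal{D}$.) Everything else --- the $\exp(h)$-twist, the identification of the integration ideal, and the preservation of holonomicity --- is inherited unchanged from the proof of Theorem~\ref{thm:E-func} and from \cite{OST,Bjork}.
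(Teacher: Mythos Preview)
Your argument is correct, but it takes a more elaborate route than the paper's own proof. Both you and the paper observe that the holonomicity clause is a purely algebraic statement about the ideal generated by \eqref{op:ann-E-1}--\eqref{op:ann-E-3} and therefore follows verbatim from the smooth case (Theorem~\ref{thm:E-func}, via Lemmas~\ref{lem:mod1} and~\ref{lem:genJ} and \cite{Bjork}); this is why the paper opens with ``We only need to prove that the differential operators \ldots\ annihilate $g(\x,\y)$.'' Where you diverge is in that remaining annihilation step. You set up a full distributional analogue of Lemma~\ref{lem:integration_ideal}: define $u=\exp(h)f$, realise $g=\langle u,1\rangle_\t$, and show that every element of the integration ideal $J$ kills $g$ because the $\pd{\t_i}$-total-derivative pieces pair to zero against the constant $1$. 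The paper instead bypasses the integration ideal entirely and verifies the annihilation generator by generator: for \eqref{op:ann-E-1} it pushes $\varphi(P_\ell)$ through the pairing using $\pd{\y_i}\exp(h)=\t_i\exp(h)$ and $(-\y_i-2\sum_k\x_{ik}\pd{\y_k})\exp(h)=-\pd{\t_i}\exp(h)$, then applies distributional adjunction to land on $\langle P_\ell\bullet f,\exp(h)\rangle=0$; the operators \eqref{op:ann-E-2}--\eqref{op:ann-E-3} are disposed of implicitly by the same ``differentiate under the pairing'' identity. Your approach buys a cleaner structural parallel with Theorem~\ref{thm:E-func} and makes explicit why the \emph{whole} of $J$ annihilates $g$; the paper's buys brevity and avoids having to justify the pairing $\langle P_i\bullet u,1\rangle_\t$ for general $P_i\in D_{\x\y\t}$, needing only the standard fact that one may differentiate a tempered pairing in a smooth Schwartz-valued parameter.
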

\begin{proof}
We only need to prove that the differential operators
 $(\ref{op:ann-E-1}),(\ref{op:ann-E-2}),(\ref{op:ann-E-3})$ annihilate
 $g(\x, \y)$. 
Let $P_\ell=\sum c_{\alpha\beta}t^\alpha\pd{t}^\beta$ and 
$P^*_\ell=\sum c_{\alpha\beta}t^\beta\pd{t}^\alpha$.
We have
\allowdisplaybreaks
\begin{align*}
&
P_\ell(\pd{\y_i};
-\y_i-2\sum_{k=i}^\d \x_{ik}\pd{\y_k})
\langle f, \exp\left(h(\t,\y,\x)\right)\rangle
\\
&\qquad\qquad=
\langle f,
P_\ell(\pd{\y_i};
-\y_i-2\sum_{k=i}^\d \x_{ik}\pd{\y_k})
\exp\left(h(\t,\y,\x)\right)\rangle 
\\
%&=&
%\langle f,
%P_\ell(\pd{\y_i};
%-\y_i-2\sum_{k=i}^m \x_{ik}\t_k)
%\exp\left(h(\t,\y,\x)\right)\rangle
%\\
&\qquad\qquad=
\langle f,
P_\ell(\pd{\y_i};-\pd{\t_i})
\exp\left(h(\t,\y,\x)\right)\rangle
\\
&\qquad\qquad=
\langle f,
P^*_\ell(-\pd{\t_i};\pd{\y_i})
\exp\left(h(\t,\y,\x)\right)\rangle
\\
&\qquad\qquad=
\langle f,
P^*_\ell(-\pd{\t_i};\t_i)
\exp\left(h(\t,\y,\x)\right)\rangle
\\
&\qquad\qquad=
\langle P_\ell(\t_i;\pd{\t_i}) f,
\exp\left(h(\t,\y,\x)\right)\rangle
\\
&\qquad\qquad=
0.
\end{align*}
\end{proof}

\section{Holonomic system associated with the orthant probability}\label{sec:orthant}
In this section we specialize $f(t)$ of the last section
to the indicator function of the positive orthant and 
we will construct a Pfaffian system associated with the integral $(\ref{eq:g})$
for the orthant probability.

\subsection{Generators of the holonomic ideal}
At first,
we obtain generators of a holonomic ideal which annihilates 
$(\ref{eq:g})$ by Theorem  $\ref{thm:E-dist}$. 
Let $E$ be the positive orthant in ${\bf R}^\d$ defined by 
\[
%E = 
\left\{\t= (\t_1,\dots,\t_\d)\in {\bf R}^\d \vert \t_i \geq 0, (i= 1,\dots,\d) \right\},
\]
and $\indicator{E}$ be the indicator function of $E$.
A holonomic ideal which annihilates $\indicator{E}$ is given as follows.
\begin{lemma}\label{lem:ann-E}%\comment{lem:ann-E}
The indicator function $\indicator{E}$ is annihilated by the 
following differential operators as a distribution.
\begin{equation}\label{eq:ann-E}
\t_1\pd{\t_1}, \dots, \t_\d\pd{\t_\d} %\quad (\pd{\t_i} = \frac{\partial}{\partial \t_i})
\end{equation}
The differential operators $(\ref{eq:ann-E})$ generate a holonomic ideal
 $J$ in the ring $D_t$.
\end{lemma}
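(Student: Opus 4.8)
The plan is to prove the two assertions separately: first that each Euler operator $t_i\pd{t_i}$ annihilates $\indicator{E}$ as a distribution, and then that the left ideal $J$ they generate is holonomic.

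For the annihilation I would use the product structure $\indicator{E}(t)=\prod_{i=1}^{d}H(t_i)$, where $H$ is the Heaviside function on ${\bf R}$, which reduces the claim to the one–variable identity $t\,\pd{t}\bullet H=0$. Since $\pd{t}\bullet H=\delta_0$ and $t\,\delta_0=0$ as distributions, this is immediate. Concretely, pairing with a Schwartz test function $\phi$ and using the formal adjoint of $t_i\pd{t_i}$,
\[
\langle t_i\pd{t_i}\bullet\indicator{E},\ \phi\rangle
= -\int_E \bigl(\phi + t_i\pd{t_i}\phi\bigr)\,dt ,
\]
and integrating by parts in the variable $t_i$ gives $\int_E t_i\pd{t_i}\phi\,dt=-\int_E \phi\,dt$ (the boundary term at $t_i=0$ vanishes because of the factor $t_i$, the one at $t_i=+\infty$ because $\phi$ is rapidly decreasing), so the right-hand side is $0$. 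In particular $1\notin J$, since $1\bullet\indicator{E}=\indicator{E}\neq 0$; hence $J$ is a proper left ideal and the module $M:=D_t/J$ is nonzero.

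For holonomicity I would compute the characteristic variety of $M$. Equip $D_t$ with the order filtration (the Bernstein filtration works equally well); the principal symbol of $t_i\pd{t_i}$ is $t_i\xi_i$, where $\xi_i$ denotes the symbol of $\pd{t_i}$, so the graded ideal $\mathrm{gr}(J)$ contains the ideal $(t_1\xi_1,\dots,t_d\xi_d)$ of ${\bf C}[t_1,\dots,t_d,\xi_1,\dots,\xi_d]$. Consequently the characteristic variety of $M$ is contained in $\{\,t_i\xi_i=0\ (i=1,\dots,d)\,\}$, which is the union over all subsets $S\subseteq\{1,\dots,d\}$ of the coordinate subspaces $\{\,t_i=0\ (i\in S),\ \xi_j=0\ (j\notin S)\,\}$; each of these has dimension exactly $d$ in ${\bf C}^{2d}$, so the characteristic variety has dimension at most $d$. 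On the other hand $M\neq 0$, so Bernstein's inequality (see \cite{SST}, \cite{Bjork}) forces its characteristic variety to have dimension at least $d$. Therefore the dimension is exactly $d$, i.e.\ $J$ is holonomic.

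I do not anticipate a genuine obstacle: the distributional identity is elementary, and holonomicity reduces to the monomial symbol computation above together with Bernstein's inequality. The only points needing a little care are that the symbol-ideal dimension count and the appeal to Bernstein's inequality must be made with respect to the same filtration on $D_t$, and that one must have recorded $M\neq 0$, which is precisely the content of the first part. (Alternatively one could exhibit an explicit ${\bf C}$-basis of $D_t/J$, but the characteristic-variety argument is shorter and more transparent.)
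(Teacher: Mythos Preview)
Your proof is correct. For the annihilation statement you do essentially what the paper does: pair with a test function and integrate by parts in $t_i$, the boundary term vanishing because of the factor $t_i$. Your preliminary remark via $\indicator{E}=\prod_i H(t_i)$ and $t\,\delta_0=0$ is a nice heuristic but not strictly needed; the concrete computation you give matches the paper's.

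For holonomicity your route diverges from the paper's. The paper verifies, via Buchberger's criterion for the weight $(0,1)$, that $\{t_i\pd{t_i}\}$ is already a Gr\"obner basis of $J$; this gives the characteristic variety \emph{exactly} as $\{t_i\xi_i=0\}$, whose decomposition into $d$-dimensional coordinate subspaces yields $\dim\mathrm{ch}(J)=d$ directly. You instead use only the trivial inclusion $\mathrm{gr}(J)\supseteq(t_1\xi_1,\dots,t_d\xi_d)$ to bound $\dim\mathrm{ch}(D_t/J)\le d$, and then invoke Bernstein's inequality together with $D_t/J\neq 0$ (which you extract from part one) to force equality. Both arguments are valid; yours avoids any Gr\"obner-basis check at the price of appealing to Bernstein's inequality, while the paper's gives the characteristic variety on the nose and never needs the lower bound. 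Your caveat about keeping the filtration consistent between the symbol computation and Bernstein's inequality is well placed.
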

\begin{proof}
At first, we show that the differential operators $\t_i\pd{\t_i}$
 annihilates the function $\indicator{E}$.
It suffices to prove for $i=1$.
Let $\varphi (\t)$ be a rapidly decreasing function on ${\bf R}^\d$. 
Then, we have
\[
-\int_0^\infty \pd{\t_1}(\t_1\varphi(\t)) d\t_1 = 0
\]
for any $\t_j \in {\bf R}, \, 2\leq j \leq \d$.
Integrating both sides with respect to the variables $\t_2, \dots, \t_\d$, we have
$
\langle \t_1\pd{\t_1} \indicator{E}, \varphi\rangle = 0.
$
Therefore, the distribution $\t_1\pd{\t_1}\indicator{E}$ is equal to $0$.

Next, we show that the left ideal $J$ is holonomic.
By the Buchberger's criterion, we can show that the  set of 
the differential operators in $(\ref{eq:ann-E})$ is 
a Gr\"{o}bner basis of $J$ with the weight $w=(0,1)$. %correct expression?
The characteristic variety (see, e.g. \cite{Oaku1}, \cite{SST}) of $J$ is 
$
{\rm ch}(J)
=
\{ (\t,\xi)\in {\bf C}^{2\d} \vert \t_i\xi_i = 0,\, 1\leq i \leq \d \}.
$
The variety ${\rm ch}(J)$ can be decomposed as follows,
$$
\bigcup_{J\subset \{1,\dots, \d\}} \{(\t,\xi) \vert \t_i = 0, \xi_j = 0, i\in
 J ,j \notin J\}.
$$
Since the Krull dimension of the each component is $\d$, we have 
${\rm dim }({\rm ch}(J)) = \d$ and the ideal $J$ is holonomic.
\end{proof}

The function $g(\x, \y)$ in $(\ref{eq:g})$ can be 
written as
$
g(\x, \y)
=
%\int_{{\bf R}^\d}
%\indicator{E}(\t)
%\exp\left(
%\sum_{i=1}^\d \y_i\t_i +\sum_{1\leq i\leq j\leq \d} \t_i\t_j\x_{ij}
%\right)
%d\t
%=
\langle 
\indicator{E}(\t),
\exp\left(  h(\x,\y,\t)  \right)
\rangle.
$
This is a case of Theorem $\ref{thm:E-dist}$ in which the distribution $f$
is $\indicator{E}$.
A holonomic ideal which annihilates $\indicator{E}$ is given in Lemma
$\ref{lem:ann-E}$.
Hence we have the following theorem for $g(\x, \y)$ in $(\ref{eq:g})$.
\begin{theorem}\label{thm:hol-orth}
Differential operators
\begin{align}
&
2\sum_{k=1}^\d \x_{ik}\pd{\y_i}\pd{\y_k}  +\y_i\pd{\y_i} + 1
\quad 
(i = 1,\dots, \d, \quad \x_{ij} = \x_{ji}),
\label{ann:orth1}\\
&
\pd{\x_{ij}} - 2\pd{\y_i}\pd{\y_j} 
\quad
(1\leq i< j \leq \d), \label{ann:orth2}\\
&
\pd{\x_{ii}} - \pd{\y_i}^2
\quad
(1\leq i\leq \d)
\label{ann:orth3}
\end{align}
annihilate the function $g(\x, \y)$ in $(\ref{eq:g})$,
and generate a holonomic ideal $I$ in the ring $D_{xy}$.
\end{theorem}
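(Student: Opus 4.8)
The plan is to derive Theorem~\ref{thm:hol-orth} as a direct application of Theorem~\ref{thm:E-dist} with $f = \indicator{E}$, using the explicit annihilators of $\indicator{E}$ supplied by Lemma~\ref{lem:ann-E}. First I would recall that $g(\x,\y) = \langle \indicator{E}(\t), \exp(h(\x,\y,\t))\rangle$ as already noted in the excerpt, and that $\indicator{E}$ is a tempered distribution whose annihilator in $D_\t$ is the holonomic ideal $J$ generated by $P_i := \t_i \pd{\t_i}$ for $i = 1,\dots,\d$. Applying the map $\varphi$ of Theorem~\ref{thm:E-dist}, the operators $(\ref{op:ann-E-2})$ and $(\ref{op:ann-E-3})$ come for free, so the only computation is $\varphi(P_i)$: substitute $\t_i \mapsto \pd{\y_i}$ and $\pd{\t_i} \mapsto -\y_i - 2\sum_{k=1}^\d \x_{ik}\pd{\y_k}$ into $\t_i\pd{\t_i}$, giving
\[
\varphi(\t_i \pd{\t_i}) = \pd{\y_i}\Bigl(-\y_i - 2\sum_{k=1}^\d \x_{ik}\pd{\y_k}\Bigr)
= -\y_i\pd{\y_i} - 1 - 2\sum_{k=1}^\d \x_{ik}\pd{\y_i}\pd{\y_k},
\]
where the constant $-1$ arises from $[\pd{\y_i},\y_i] = 1$. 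Negating this yields exactly the operator in $(\ref{ann:orth1})$, so $(\ref{ann:orth1})$--$(\ref{ann:orth3})$ are (up to sign) precisely the operators $(\ref{op:ann-E-1})$--$(\ref{op:ann-E-3})$ for this choice of $f$.

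Next I would invoke the two assertions of Theorem~\ref{thm:E-dist}: since $P_1,\dots,P_\d$ annihilate $\indicator{E}$ as a distribution, the operators $(\ref{op:ann-E-1})$--$(\ref{op:ann-E-3})$ annihilate $g(\x,\y)$ on the domain $\Omega$ where $-\x$ is positive definite; and since $J = (P_1,\dots,P_\d)$ is holonomic in $D_\t$ by Lemma~\ref{lem:ann-E}, the same theorem guarantees that the ideal $I$ generated by $(\ref{ann:orth1})$--$(\ref{ann:orth3})$ is holonomic in $D_{\x\y}$. That is all that is claimed, so the proof is essentially a one-line citation of the earlier results once the translation of generators under $\varphi$ is written out.

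The only place requiring a moment of care is confirming that the generic-notation hypothesis of Theorem~\ref{thm:E-dist} is genuinely met here: one must check that $\indicator{E}$ is tempered (it is bounded, hence a tempered distribution) and that $\exp(h(\x,\y,\t))$ is rapidly decreasing in $\t$ on $\Omega$ (true since $-\x$ positive definite forces Gaussian decay), both of which are already observed in the discussion preceding $(\ref{fn:g-dist})$. I do not anticipate any real obstacle; the substantive content has been front-loaded into Theorems~\ref{thm:E-func} and~\ref{thm:E-dist} and Lemma~\ref{lem:ann-E}, and Theorem~\ref{thm:hol-orth} is their corollary. The proof therefore amounts to: identify $f = \indicator{E}$, recall its holonomic annihilator from Lemma~\ref{lem:ann-E}, compute $\varphi(\t_i\pd{\t_i})$ to see it matches $(\ref{ann:orth1})$, and apply Theorem~\ref{thm:E-dist} for both the annihilation and the holonomicity statements.
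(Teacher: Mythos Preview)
Your proposal is correct and follows exactly the paper's own approach: the paper proves Theorem~\ref{thm:hol-orth} simply by observing that $g(\x,\y)=\langle \indicator{E},\exp(h)\rangle$ is the case $f=\indicator{E}$ of Theorem~\ref{thm:E-dist}, with the holonomic annihilator of $\indicator{E}$ supplied by Lemma~\ref{lem:ann-E}. Your explicit computation of $\varphi(\t_i\pd{\t_i})$ and the checks that $\indicator{E}$ is tempered are helpful details that the paper leaves implicit, but the logical structure is identical.
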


\subsection{Differential recurrence formula}\label{sec:Pfaff}
Next, we give a Pfaffian system associated with $(\ref{eq:g})$.
In order to write the Pfaffian system, we define new  notation.
For $J\subset [\d]=\{1,\dots,d\}$,
we put
%\begin{eqnarray}\label{fn:gJ}
\begin{align*}
h_J(\x,\y,x) 
&=\sum_{i\in J}\sum_{j\in J}\x_{ij}\t_i\t_j + \sum_{k\in J} \y_k\t_k,\\
g_J(\x, \y) 
&= \int_0^\infty \dots \int_0^\infty
\exp\left(  h_J(\x,\y,\t)  \right)
dt_J, 
%\end{eqnarray}
\end{align*}
where %Here, $[\d]$ is the set $\{1, \dots, \d\}$ and 
$d\t_J=\prod_{j\in J} d\t_j$.
When the set $J$ is empty, we set $g_\emptyset = 1$.
%\begin{example}
For example, the functions are written as follows when $\d=2$. 
\begin{eqnarray*}
g_{\{1,2\}}(\x, \y) &=& \int_0^\infty \exp\left( 
\t_1^2\x_{11}+2\t_1\t_2\x_{12}+\t_2^2\x_{22}+\y_1\t_1 +\y_2\t_2 
\right) d\t_1d\t_2
\\  &=&  g(\x, \y),
\\
g_{\{1\}}(\x, \y) &=& \int_0^\infty \exp\left(\t_1^2\x_{11}+ \y_1\t_1 \right) d\t_1 , 
\\
g_{\{2\}}(\x, \y) &=& \int_0^\infty \exp\left(\t_2^2\x_{22}+ \y_2\t_2 \right) d\t_2 , 
\\
g_\emptyset             &=& 1.
\end{eqnarray*}
%\end{example}

Let $J$ be a subset of $[\d]$ and put 
\begin{eqnarray}
\label{op:Qj}
Q_j &=& -\left(\y_j + 2\sum_{k=1}^\d \x_{jk}\pd{\y_k }\right) 
\quad (j= 1,\dots, \d),\\
\label{op:QjJ}
Q_{j,J} &=& -\left(\y_j + 2\sum_{k\in J} \x_{jk}\pd{\y_k }\right) 
\quad (j\in J).
\end{eqnarray}
\begin{lemma}\label{lem:crsp} 
The following equation holds.
\begin{equation}\label{eq:crsp} 
Q_j g_J = Q_{j,J} g_J= g_{J\backslash \{j\} } \quad (j\in J).
\end{equation}
\end{lemma}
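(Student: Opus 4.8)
The plan is to prove the identity $Q_j g_J = Q_{j,J} g_J = g_{J\setminus\{j\}}$ by direct computation under the integral sign, exploiting the fact that $\partial h_J/\partial t_j$ has a particularly simple form when $j \in J$. First I would observe that for $j \in J$ we have
\[
\frac{\partial h_J}{\partial \t_j} = 2\sum_{k\in J}\x_{jk}\t_k + \y_j,
\]
so that applying the differential operator $\pd{\y_k}$ under the integral sign brings down a factor $\t_k$ (valid since $\exp(h_J)$ is rapidly decreasing in $\t$ when $-\x$ is positive definite, justifying differentiation under the integral). Hence
\[
Q_{j,J} g_J = -\int_0^\infty\!\!\cdots\!\!\int_0^\infty \left(\y_j + 2\sum_{k\in J}\x_{jk}\t_k\right)\exp(h_J)\, d\t_J
= -\int \frac{\partial h_J}{\partial \t_j}\exp(h_J)\, d\t_J
= -\int \pd{\t_j}\bigl(\exp(h_J)\bigr)\, d\t_J.
\]
Integrating out the $\t_j$ variable first, the inner integral $\int_0^\infty \pd{\t_j}(\exp h_J)\, d\t_j = \bigl[\exp h_J\bigr]_{\t_j=0}^{\t_j=\infty} = -\exp(h_J)\big|_{\t_j=0}$, because the boundary term at $\infty$ vanishes by rapid decay. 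But $h_J$ evaluated at $\t_j = 0$ is exactly $h_{J\setminus\{j\}}$, since every surviving term of $h_J$ that involved $\t_j$ is killed. Integrating the remaining variables $\t_k$, $k\in J\setminus\{j\}$, then yields $g_{J\setminus\{j\}}$, which gives $Q_{j,J} g_J = g_{J\setminus\{j\}}$.

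It remains to show $Q_j g_J = Q_{j,J} g_J$, i.e. that the extra terms in $Q_j$ — those $2\x_{jk}\pd{\y_k}$ with $k \in [\d]\setminus J$ — act as zero on $g_J$. This is immediate: $g_J$ does not depend on $\y_k$ for $k \notin J$ (the variable $\y_k$ simply does not occur in $h_J$), so $\pd{\y_k} g_J = 0$ for all such $k$, and therefore $Q_j g_J = Q_{j,J} g_J$. One should be a little careful about the convention $\x_{jk} = \x_{jk}$ in the sum defining $Q_j$ and make sure the symmetry assumption $\x_{ij}=\x_{ji}$ used elsewhere does not cause trouble here; but since we only differentiate, no issue arises.

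The only genuine subtlety — and the step I would treat most carefully — is the justification of differentiation under the integral sign and the vanishing of the boundary term, which requires that $-\x$ be positive definite so that $\exp(h_J)$ together with all its polynomial multiples is rapidly decreasing on the orthant; this is exactly the standing hypothesis on the domain $\Omega$. Everything else is a routine manipulation. I would present the computation for a general $j \in J$ (the notation already handles arbitrary $J$), noting that the case of smaller $J$ is literally the same statement applied to the function $g_J$ in place of $g_{[\d]}$, since $g_J$ is an integral of the same type over fewer variables.
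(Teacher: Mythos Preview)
Your proof is correct and follows essentially the same route as the paper's: both arguments differentiate under the integral sign to convert $\pd{\y_k}$ into multiplication by $\t_k$, recognize the resulting expression as $-\pd{\t_j}\exp(h_J)$, and integrate out $\t_j$ to pick up the boundary value $\exp(h_{J\setminus\{j\}})$ at $\t_j=0$; the equality $Q_j g_J = Q_{j,J} g_J$ is handled identically via $\pd{\y_k} g_J = 0$ for $k\notin J$. The only cosmetic difference is that the paper first reduces to $J=[\d]$ and works with $Q_j$, while you keep $J$ general and work with $Q_{j,J}$ directly.
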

%{\bf Remark}\/.
%Using the ${\bf C}$-algebra morphism $\varphi$ in the section $\ref{sec:expectation}$,
%the differential operator $Q_i$ is written as 
%$\varphi(\pd{t_i})$.
\begin{proof}
Since $g_J$ is constant with respect to $y_\ell\ (\ell \notin J)$, 
we have $Q_j g_J = Q_{j,J} g_J$.
We assume that $J=[\d]$ without loss of generality.
Applying $Q_j$ to the integrand of $g_{[\d]}$, we have
\begin{eqnarray*}
&&-\left(\y_j + 2\sum_{k=1}^\d \x_{jk}\pd{\y_k }\right) \exp\left(h(\x,\y,\t)\right) \\
&=& -\left(\y_j + 2\sum_{k=1}^\d \x_{jk}t_k \right) \exp\left(h(\x,\y,\t)\right) \\
&=& -\pd{t_j} \exp\left(h(x,y,t)\right).
\end{eqnarray*}
Integrating both sides of the equation from $0$ to $\infty$ with respect to $t_j$, we have
$$
Q_j\int_0^{\infty} \exp(h(\x,\y,\t)) dt_j = \exp\left( h_{[\d]\backslash \{j\}}(\x,\y,\t)\right).
$$
Integrating both sides of the equation with respect to the remaining variables, we have the equation $(\ref{eq:crsp})$.
\end{proof}

{\bf Remark}\/.
By Lemma \ref{lem:crsp}, we have
$
Q_iQ_jg
=
g_{[d]\backslash \{i,j\}} 
$
for $i\neq j$
When the vector $y$ is equal to zero, 
this equation can be written as 
\[
%\begin{equation}\label{eq:schlafli1}
\left(
2\x_{ij}
+4\sum_{k=1}^d \x_{ik}\x_{jk}\pd{\x_{kk}}
+2\sum_{1\leq k\neq \ell \leq d} \x_{ik}\x_{j\ell}\pd{\x_{k\ell}}
\right)
g
=
g_{[d]\backslash \{i,j\}} .
%\end{equation}
\]
By the transformation of parameters with 
$\Sigma = (\sigma_{k\ell}) =-\frac{1}{2}\x^{-1}$, 
we have
\[
\frac{\pd{}}{\pd{} \sigma_{k\ell}^J}
\left(
|\sigma|^{-1/2}g
\right)
=
|\sigma|^{-1/2}g_{[d]\backslash \{i,j\}}.
\]
It can be easily checked that this equation corresponds to the classical Schl\"{a}fli's
formula.

\bigskip
For $J=\{j_1,\dots, j_s\}\subset [\d], \,(j_1<j_2<\dots<j_s)$, we denote 
\begin{eqnarray*}
\x_J &=& (\x_{j_kj_\ell})_{1\leq k,\ell \leq s},\quad
\y_J = (\y_{j_1},\dots, \y_{j_s})^T, \\
\Sigma_J &=& -\frac{1}{2}\x_J^{-1} =(\sigma_{ij}^J),\quad
\mu_J = \Sigma_J\y_J = (\mu_{j_1}^J,\dots,\mu_{j_s}^J).
\end{eqnarray*}
The following differential recurrence formula holds.
%gives a Pfaffian system which $g(\x, \y)$ satisfies. 
\begin{theorem}\label{thm:Pfaff}
%The following relations hold 
For any $J\subset [\d]$.
\begin{eqnarray}
 \label{eq:Pfaff-1}
  \pd{\y_i}g_J
  &=& 
  \begin{cases}
   \mu_i^Jg_J + \sum_{j\in J}\sigma_{ij}^Jg_{J\backslash\{j\}} 
   & i \in J\\
   0
   & i \notin J
  \end{cases} \\ 
 \label{eq:Pfaff-2}
  \pd{\x_{ij}}g_J
  &=&
  \begin{cases}
%   \mu_j^J\left(\mu_i^Jg_J+\sum_{\ell\in
%   J}\sigma_{i\ell}^Jg_{J\backslash\ell}\right)\\
%   +\sum_{k\in J} \sigma_{jk}^J\left(\sum_{\ell\in J\backslash k}
%   \sigma_{i\ell}^{J\backslash k}g_{J\backslash\{k,\ell\}}+
%   \mu_i^{J\backslash k}g_{J\backslash k}\right)\\
%   +\sigma_{ij}^Jg_J
   2\pd{\y_i}\pd{\y_j}g_J& \{i,j \}\subset J,\, i<j\\
   \pd{\y_i}^2g_J& \{i\}\subset J, \, i=j \\
   0 &  else.
  \end{cases}
\end{eqnarray}
\end{theorem}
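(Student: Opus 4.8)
The plan is to prove the two displayed identities separately. Formula $(\ref{eq:Pfaff-2})$ will come essentially for free from what has already been established, while $(\ref{eq:Pfaff-1})$ will be obtained by reading Lemma $\ref{lem:crsp}$ as a square system of linear equations for the first-order derivatives $\pd{y_k}g_J$, $k\in J$, and inverting it.

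For $(\ref{eq:Pfaff-2})$: from the definition of $h_J$ one sees that $g_J$ depends only on the variables $x_{k\ell}$ with $k,\ell\in J$ and $y_k$ with $k\in J$, so $\pd{x_{ij}}g_J=0$ whenever $\{i,j\}\not\subset J$, which is the ``else'' case. When $\{i,j\}\subset J$, the function $g_J$ is, after relabeling the indices in $J$ as $1,\dots,|J|$, exactly the function $g$ of Theorem $\ref{thm:hol-orth}$ in dimension $|J|$; hence the operators $(\ref{ann:orth2})$ and $(\ref{ann:orth3})$ for the index set $J$ annihilate it, giving $\pd{x_{ij}}g_J=2\pd{y_i}\pd{y_j}g_J$ for $i<j$ and $\pd{x_{ii}}g_J=\pd{y_i}^2g_J$. (Alternatively one differentiates under the integral sign, which is legitimate since $\exp(h_J)$ and its derivatives are rapidly decreasing in $t_J$ on $\Omega$.)

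For $(\ref{eq:Pfaff-1})$: if $i\notin J$ then $g_J$ does not involve $y_i$, so $\pd{y_i}g_J=0$. Now fix $i\in J$. By Lemma $\ref{lem:crsp}$, for every $j\in J$,
\[
y_j g_J+2\sum_{k\in J}x_{jk}\pd{y_k}g_J=-g_{J\backslash\{j\}}.
\]
Collecting these $|J|$ equations and writing $v=(\pd{y_k}g_J)_{k\in J}$ and $w=(g_{J\backslash\{j\}})_{j\in J}$, this is the matrix identity $2x_J v=-y_J g_J-w$. On $\Omega$ the matrix $-x$ is positive definite, hence so is its principal submatrix $-x_J$; in particular $x_J$ is invertible. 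Multiplying by $x_J^{-1}$ and using $\Sigma_J=-\frac{1}{2}x_J^{-1}$ and $\mu_J=\Sigma_J y_J$ gives
\[
v=\Sigma_J\bigl(y_J g_J+w\bigr)=\mu_J g_J+\Sigma_J w,
\]
whose $i$-th component is precisely $\pd{y_i}g_J=\mu_i^J g_J+\sum_{j\in J}\sigma_{ij}^J g_{J\backslash\{j\}}$.

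The computation is routine bookkeeping with the definitions of $\Sigma_J$ and $\mu_J$; the one point that genuinely requires an argument — and the only place a proof could slip — is the invertibility of the coefficient matrix $2x_J$ on the domain $\Omega$ where $g_J$ is defined, which follows from the fact that a principal submatrix of a positive definite matrix is again positive definite.
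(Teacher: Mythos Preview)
Your proof is correct and follows essentially the same approach as the paper. The only cosmetic difference is organizational: for $(\ref{eq:Pfaff-1})$ the paper starts from the right-hand side, substitutes Lemma~$\ref{lem:crsp}$, and simplifies using $\Sigma_J y_J=\mu_J$ and $-2\Sigma_J x_J=I$ to reach $\pd{y_i}g_J$, whereas you package the same relations as the matrix identity $2x_J v=-y_J g_J-w$ and invert; for $(\ref{eq:Pfaff-2})$ the paper differentiates directly under the integral sign, which you list as your alternative argument.
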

\begin{proof}
For $i\notin J$, we have $\pd{\y_i}g_J=0$ since the $g_J$ is constant with
 respect to $\y_i$.
For  $i\in J$ Lemma $\ref{lem:crsp}$ implies 
\begin{eqnarray*}
\mu_i^Jg_J + \sum_{j\in J}\sigma_{ij}^Jg_{J\backslash\{j\}} 
&=&
\mu_i^Jg_J - \sum_{j\in J}\sigma_{ij}^J
\left(\y_j^J + 2\sum_{k\in J} \x_{jk}^J\pd{\y_k } \right)g_J \\
&=&
\left(
\mu_i^J- 
\sum_{j\in J}\sigma_{ij}^J\y_j^J 
- 2\sum_{k,j\in J} \sigma_{ij}^J\x_{jk}^J\pd{\y_k } 
\right)g_J.
\end{eqnarray*}
We have 
$
\sum_{j\in J}\sigma_{ij}^J\y_j^J = \mu_i^J
$
by the relation $\mu^J=\Sigma^J\y^J$
and we also have
$$
-2\sum_{k,j\in J} \sigma_{ij}^J\x_{jk}^J \pd{\y_k}
= \sum_{k\in J} \delta_{ik}\pd{\y_k}
= \pd{\y_i}, 
$$
since $-2\Sigma^J\x^J$ is the identity matrix of size $|J|$.
Hence, the right-hand side of $(\ref{eq:Pfaff-1})$ equals $\pd{\y_i}g_J$.

For $\{i,j\}\not\subset J$, $\pd{\x_{ij}}g_J=0$ since the $g_J$ is constant with
 respect to $\x_{ij}$.
For $\{i,j\}\subset J$
$$
\pd{\x_{i,j}} 
\exp\left(
h_J(\x,\y,\t)
\right)
= 
(2-\delta_{ij})
\pd{\y_i}\pd{\y_j}
\exp\left(
h_J(\x,\y,\t)
\right).
$$
Integrating both sides %and applying  the Fubini's theorem, 
we have the  relation $(\ref{eq:Pfaff-2})$.
\end{proof}

By theorem \ref{thm:Pfaff}, we have differential operators $\pd{x_{ij}}-A_{ij}, \pd{y_i}-A_i$
which annihilate the vector value function
$
G(x,y) = (g_J(x,y))_{j\subset [\d]}.
$
Here, $A_{ij}$ and $A_i$ are $2^d\times 2^d$ matrices with rational function entries.
In the next subsection, we prove that the system of these differential operators
is a Pfaffian system in the meaning of \cite[Section 2]{kn2t}.
Note that the Pfaffian system have no singular point on
\begin{equation}\label{positive-definite}
  \{(\x,\y) \vert 
    \text{$-\x$ is positive definite }
  \}.
\end{equation}

\subsection{Pfaffian system and the holonomic rank}\label{sec:rank}
In this section, we give the holonomic rank of the ideal $I$ generated by
(\ref{ann:orth1})--(\ref{ann:orth3}), and show 
that the differential recurrence formula
$(\ref{eq:Pfaff-1})$ and $(\ref{eq:Pfaff-2})$ in the subsection \ref{sec:Pfaff} 
give a Pfaffian system associated with $(\ref{eq:g})$.

In this subsection, we denote the ring of differential operators
in the variables $x, y$ by $R$.
The holonomic rank of $I$ is the dimension of $R/RI$ as 
a vector space over the field of rational functions ${\bf C}(x,y)$ (see, e.g., \cite{n3ost2}).

At first, we give a lower bound of the holonomic rank.
Since the holonomic rank equals to the dimension of holomorphic solutions of $I \bullet f=0$ at generic points,
we can obtain a lower bound of the holonomic rank by constructing linearly
independent functions annihilated by $I$.

\begin{lemma}\label{lem:lower-bound}
The holonomic rank of the ideal $I$ in Theorem $\ref{thm:hol-orth}$ 
is not less than $2^\d$, i.e., $\rank{I} \geq 2^\d$.
\end{lemma}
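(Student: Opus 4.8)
Following the remark above, I would bound $\rank{I}$ from below by exhibiting $2^\d$ holomorphic functions annihilated by $I$ that are linearly independent over ${\bf C}$. The candidates are the integrals of $\exp(h)$ over the $2^\d$ closed coordinate orthants: for a sign pattern $\epsilon=(\epsilon_1,\dots,\epsilon_\d)\in\{1,-1\}^\d$ put $E_\epsilon=\{\t\in{\bf R}^\d\mid \epsilon_i\t_i\ge 0\ (i=1,\dots,\d)\}$ and
\[
g_\epsilon(\x,\y)=\langle\, \indicator{E_\epsilon}(\t),\ \exp\!\big(h(\x,\y,\t)\big)\,\rangle .
\]
Exactly as in Section~\ref{sec:expectation}, this pairing is finite and holomorphic in $(\x,\y)$ on the domain \eqref{positive-definite}, and $g_{(1,\dots,1)}$ is the function $g$ of \eqref{eq:g}.

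First I would check that every $g_\epsilon$ is annihilated by the ideal $I$ of Theorem~\ref{thm:hol-orth}. For this it suffices to observe that Lemma~\ref{lem:ann-E} goes through with $E_\epsilon$ in place of $E$: the identity $\int_{\{\epsilon_i\t_i\ge0\}}\pd{\t_i}\!\big(\t_i\varphi(\t)\big)\,d\t_i=0$ holds for either sign, so $\t_i\pd{\t_i}$ annihilates $\indicator{E_\epsilon}$ as a tempered distribution, and the computation of the characteristic variety is insensitive to the choice of orthant, so $\t_1\pd{\t_1},\dots,\t_\d\pd{\t_\d}$ again generate a holonomic ideal in $D_\t$. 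Applying Theorem~\ref{thm:E-dist} with $f=\indicator{E_\epsilon}$ then shows that the generators \eqref{ann:orth1}--\eqref{ann:orth3} of $I$ annihilate $g_\epsilon$.

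Next I would prove that $\{g_\epsilon : \epsilon\in\{1,-1\}^\d\}$ is ${\bf C}$-linearly independent. Since these are restrictions of holomorphic functions to the connected set \eqref{positive-definite}, it is enough to check linear independence after specializing to $\x=-\tfrac12\,\mathrm{Id}$ with $\y$ generic, where the integral factors:
\[
g_\epsilon\!\big(-\tfrac12\mathrm{Id},\y\big)=\prod_{i=1}^\d a_{\epsilon_i}(\y_i),\qquad
a_{\pm}(u)=\int_{\{\pm t\ge 0\}}\exp\!\big(-\tfrac12 t^2+ut\big)\,dt .
\]
Completing the square gives $a_{\pm}(u)=e^{u^2/2}A_{\pm}(u)$ with $A_{+}(u)=\int_{-u}^{\infty}e^{-s^2/2}\,ds$, $A_{-}(u)=\int_{-\infty}^{-u}e^{-s^2/2}\,ds$ and $A_{+}+A_{-}\equiv\sqrt{2\pi}$; since $A_{+}$ is nonconstant, $A_{+},A_{-}$ and hence $a_{+},a_{-}$ are linearly independent functions of $u$. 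Because each factor $a_{\epsilon_i}(\y_i)$ depends only on the single variable $\y_i$, a vanishing ${\bf C}$-linear combination of the $2^\d$ products can be peeled off one variable at a time (evaluating $\y_i$ at two points at which the $2\times 2$ matrix with entries $a_{\delta}(u_k)$, $\delta\in\{1,-1\}$, $k=1,2$, is invertible), forcing all coefficients to vanish.

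Putting the pieces together, at a generic point of \eqref{positive-definite} the space of holomorphic solutions of $I\bullet u=0$ has dimension $\rank{I}$ and contains the $2^\d$ linearly independent functions $g_\epsilon$; hence $\rank{I}\ge 2^\d$. The only genuinely new point is the separation-of-variables argument for linear independence of the $g_\epsilon$; the annihilation claim is a routine rerun of Lemma~\ref{lem:ann-E} and Theorem~\ref{thm:E-dist}, and I expect no obstacle there.
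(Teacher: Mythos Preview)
Your proposal is correct and follows the same architecture as the paper's proof: exhibit the $2^\d$ orthant integrals $g_\varepsilon$ as holomorphic solutions of $I$ and then show they are linearly independent. The annihilation step is handled identically in both.

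The only divergence is in the linear independence argument. The paper normalizes the $g_\varepsilon$ to orthant probabilities $\Prob(X\in E_\varepsilon\mid\mu,\Sigma)$, substitutes $\mu=t\varepsilon_0$ for a fixed sign vector $\varepsilon_0$, and sends $t\to+\infty$; then all terms in $\sum_\varepsilon c_\varepsilon\,\Prob(X\in E_\varepsilon)$ tend to zero except the $\varepsilon_0$ term, which tends to $c_{\varepsilon_0}$, forcing $c_{\varepsilon_0}=0$. Your approach instead specializes to $\x=-\tfrac12\mathrm{Id}$, where $g_\varepsilon$ factors as $\prod_i a_{\varepsilon_i}(\y_i)$, and then peels off one coordinate at a time using the linear independence of $a_+,a_-$. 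Both are short and valid; the paper's limiting argument is a one-line probabilistic observation that kills all coefficients without induction, while your tensor-product argument is more algebraic and self-contained, not relying on the probabilistic interpretation of $g_\varepsilon$.
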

\begin{proof}
For a vector 
$
\varepsilon 
= 
(\varepsilon_1,\dots, \varepsilon_\d)\in \left\{\pm 1 \right\}^\d
$,
let $E_\varepsilon$ be an orthant
\[
\left\{ 
\t = (\t_1,\dots, \t_\d) \in {\bf R}^\d \vert \varepsilon_i\t_i > 0 \ (i=1,\dots,\d)\right
\}.
\]
It is enough to show that
the following $2^\d$ functions are linearly independent and 
annihilated by $I$;
\[
g_\varepsilon(\x, \y) 
= 
\int_{E_\varepsilon}
\exp\left(
h(\x, \y, \t)
%\sum_{i,j=1}^\d \x_{ij}\t_i\t_j +\sum_{i=1}^\d \y_i\t_i
\right)
d\t.
\]

By an analogous way as in the proof of Lemma $\ref{lem:ann-E}$, 
we can show that
the indicator function of $E_{\varepsilon}$ is annihilated 
by the differential operators in $(\ref{eq:ann-E})$ 
for any $\varepsilon \in \{\pm 1\}^\d$.
Analogously to the proof of Theorem $\ref{thm:E-dist}$,
we can show that the differential operators (\ref{ann:orth1})--(\ref{ann:orth3})
annihilate $g_{\varepsilon}(\x, \y)$.

Let $c_\varepsilon$ be a real number for $\varepsilon \in \{\pm 1\}^\d$, 
and suppose $\sum_\varepsilon c_\varepsilon g_\varepsilon = 0$.
Multiplying
 both sides of the equation by 
 $(2\pi)^{-\d/2}(\det \Sigma)n^{-1/2}\exp(-\frac{1}{2}\mu^t\Sigma^{-1}\mu)$, 
we have
$$
\sum_{\varepsilon \in  \{\pm 1\}^\d} c_\varepsilon \Prob(E_\varepsilon \vert \mu,\Sigma) = 0.
$$
Here, $\Prob(E_\varepsilon \vert \mu,\Sigma)$ is the probability of
 the event $E_\varepsilon$ under the multivariate normal distribution
 $N(\mu, \Sigma)$.
Substituting 
$\mu = t\varepsilon, (\varepsilon \in  \left\{ \pm 1 \right\}^\d)$
%then we have
%\[
%\sum_{\varepsilon'} c_{\varepsilon'} {\bf P}(E_{\varepsilon'} \vert t\varepsilon,\Sigma) = 0.
%\]
and taking a limit $t\rightarrow +\infty$,
we have 
\[
c_\varepsilon = 0
\]
Hence, the functions $g_\varepsilon(\x, \y) $ are linearly independent.
\end{proof}

In order to obtain  an upper bound of the holonomic rank of $I$,
we construct bases of $R/RI$ as a linear space over
${\bf C}(\x,\y)$.
The bases correspond to the functions $g_J$.

For $J\subset [\d]$, 
we put a differential operator
\begin{equation}\label{op:P_J}
P_J = \prod_{j'\in [\d]\backslash J} Q_{j'},
\end{equation}
where  $Q_j$ is the differential operator in $(\ref{op:Qj})$.
Note that the differential operators $Q_j$ commute with each other.
By Lemma $\ref{lem:crsp}$, we have
\begin{equation}\label{eq:PJ}
P_J g = g_J.
\end{equation}
Equation $(\ref{eq:PJ})$ means that the differential operator $P_J$ corresponds to 
the function $g_J$.
%\begin{example}
For example, 
When $\d = 2$ and $J = \emptyset$, the equation $(\ref{eq:PJ})$ is written as follows.  
\[
\left(\y_1 + 2\x_{11}\pd{\y_1} +2\x_{12}\pd{\y_2} \right)
\left(\y_2 + 2\x_{21}\pd{\y_1} +2\x_{22}\pd{\y_2} \right)
g_{\{1,2\}} 
= 1.
\]
%\end{example}

Since the differential operator $Q_j$ commutes with 
$\pd{\x_{ij}}-2\pd{\y_i}\pd{\y_j}\, (1\leq i < j \leq \d)$ and 
$\pd{\x_{ii}}-\pd{\y_i}^2\, (1\leq i \leq \d)$, 
we have the following lemma.
\begin{lemma}\label{lem:mod}
The following formulas hold in $R/RI$.
\begin{eqnarray}
&& \pd{\x_{ij}}P_J = 2\pd{\y_i}\pd{\y_j}P_J
  \quad \left(1\leq i < j \leq \d , J\subset [\d]\right),\\
&& \pd{\x_{ii}}P_J = \pd{\y_i}^2P_J
  \quad \left(1\leq i \leq \d , J\subset [\d]\right).
\end{eqnarray}
\end{lemma}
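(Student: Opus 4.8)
The plan is to establish the two displayed formulas in $R/RI$ by a direct commutation argument, reducing everything to relations that already hold among the generators of $I$. The key observation, already noted in the paragraph preceding the statement, is that each $Q_j$ from $(\ref{op:Qj})$ commutes with every operator of the form $\pd{\x_{ij}}-2\pd{\y_i}\pd{\y_j}$ ($i<j$) and $\pd{\x_{ii}}-\pd{\y_i}^2$, since $Q_j$ involves only the $\y_k$, $\pd{\y_k}$ and the constants $\x_{jk}$, none of which fail to commute with $\pd{\x_{ii}}$ or $\pd{\x_{ij}}$ for $i<j$, and $[\pd{\y_k},\pd{\y_i}\pd{\y_j}]=0$. (One must be slightly careful: $Q_j$ contains $\x_{jk}$, and $\pd{\x_{jk}}$ does not commute with $\x_{jk}$; but the operators $\pd{\x_{ij}}-2\pd{\y_i}\pd{\y_j}$ only appear with $i<j$ in $(\ref{ann:orth2})$, and by symmetry $\x_{ij}=\x_{ji}$ the relevant index pairs are unordered, so for a fixed pair $\{i,j\}$ the operator $\pd{\x_{ij}}$ may indeed clash with a coefficient $\x_{jk}$ inside some $Q_{j'}$ — this is the point that needs the commutator to be handled, and I address it below.)

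First I would record the elementary fact that in $D_{xy}$ the generator $\pd{\x_{ij}}-2\pd{\y_i}\pd{\y_j}$ (and similarly $\pd{\x_{ii}}-\pd{\y_i}^2$) is central modulo $I$ in the following sense: for any operator $T$ built from the $\y_k,\pd{\y_k}$ and from $\x_{k\ell}$ with $\{k,\ell\}\neq\{i,j\}$, we have $T\cdot(\pd{\x_{ij}}-2\pd{\y_i}\pd{\y_j})=(\pd{\x_{ij}}-2\pd{\y_i}\pd{\y_j})\cdot T$. Since $P_J=\prod_{j'\in[\d]\backslash J}Q_{j'}$ and each $Q_{j'}$ has the form $-(\y_{j'}+2\sum_k \x_{j'k}\pd{\y_k})$, the only obstruction to commuting $\pd{\x_{ij}}$ past $P_J$ is the term $2\x_{ij}\pd{\y_j}$ appearing in $Q_i$ and the term $2\x_{ij}\pd{\y_i}$ appearing in $Q_j$ (using $\x_{ij}=\x_{ji}$). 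I would compute the resulting commutator $[\pd{\x_{ij}},P_J]$ explicitly; it is a sum over those $j'\in[\d]\backslash J$ with $j'\in\{i,j\}$ of $P_J$ with one factor $Q_{j'}$ replaced by $2\pd{\y_{i'}}$ (the partner index), and I would check that the same commutator is produced by commuting $2\pd{\y_i}\pd{\y_j}$ past $P_J$ — because $[\pd{\y_i}\pd{\y_j},Q_{j'}]$ for $j'\in\{i,j\}$ yields exactly the corresponding $\pd{\y}$-substitution. Thus $[\pd{\x_{ij}}-2\pd{\y_i}\pd{\y_j},P_J]=0$ identically in $D_{xy}$, not merely modulo $I$.

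Once that identity is in hand, the conclusion is immediate: write $\pd{\x_{ij}}P_J = P_J\pd{\x_{ij}} + [\pd{\x_{ij}},P_J]$. Modulo $RI$ we may replace the leftmost factor $\pd{\x_{ij}}$ inside $P_J\pd{\x_{ij}}$ — wait, more cleanly: $\pd{\x_{ij}}P_J - 2\pd{\y_i}\pd{\y_j}P_J = (\pd{\x_{ij}}-2\pd{\y_i}\pd{\y_j})P_J = P_J(\pd{\x_{ij}}-2\pd{\y_i}\pd{\y_j})$ by the commutation identity, and the right-hand side lies in $RI$ because $\pd{\x_{ij}}-2\pd{\y_i}\pd{\y_j}$ is one of the generators $(\ref{ann:orth2})$ of $I$. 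Hence $\pd{\x_{ij}}P_J \equiv 2\pd{\y_i}\pd{\y_j}P_J$ in $R/RI$. The diagonal case is identical, using $(\ref{ann:orth3})$ in place of $(\ref{ann:orth2})$.

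The main obstacle is the bookkeeping in the commutator computation: one must verify that the $\x_{ij}$-dependence inside the factors $Q_i$ and $Q_j$ of $P_J$ produces exactly the same correction term whether one differentiates $\pd{\x_{ij}}$ through $P_J$ or moves $2\pd{\y_i}\pd{\y_j}$ through it. This is a finite, mechanical check — it only involves the Leibniz rule applied to a product of first-order operators and the single nontrivial commutator $[\pd{\x_{ij}},\x_{ij}]=1$ — but it is the one place where a sign or an index could go wrong, so I would carry it out carefully for a representative factor and then invoke symmetry. Everything else (commutation of $\pd{\y_k}$ with $\pd{\y_i}\pd{\y_j}$, and with $\y_\ell$ up to scalars that cancel) is routine. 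I would therefore organize the proof as: (i) state the commutation identity $[\pd{\x_{ij}}-2\pd{\y_i}\pd{\y_j},\,Q_{j'}]=0$ for every $j'$, proved by direct calculation; (ii) deduce $[\pd{\x_{ij}}-2\pd{\y_i}\pd{\y_j},\,P_J]=0$ by the Leibniz rule; (iii) conclude the two formulas modulo $RI$ as above.
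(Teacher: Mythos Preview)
Your proposal is correct and follows essentially the same route as the paper's own proof: both arguments verify by a direct commutator computation that $[\pd{\x_{ij}}-2\pd{\y_i}\pd{\y_j},\,Q_k]=0$ (and likewise $[\pd{\x_{ii}}-\pd{\y_i}^2,\,Q_k]=0$) for every $k$, deduce that $P_J$ commutes with these generators, and conclude $(\pd{\x_{ij}}-2\pd{\y_i}\pd{\y_j})P_J=P_J(\pd{\x_{ij}}-2\pd{\y_i}\pd{\y_j})\in RI$. The paper simply writes out the four commutator identities explicitly rather than discussing which factors might obstruct commutation, but the content is the same.
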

\begin{proof}
For $1\leq i < j \leq \d$ and $1\leq k \leq \d$, we have
\begin{align*}
\pd{\x_{ij}}Q_k
&=  -\pd{\x_{ij}}\left(\y_k + 2\sum_{\ell=1}^\d \x_{k\ell}\pd{\y_\ell }\right)
=  Q_k\pd{\x_{ij}} -2\delta_{ik}\pd{y_j}-2\delta_{jk}\pd{y_i}, \\
2\pd{\y_i}\pd{\y_j}Q_k
&=  -2\pd{\y_i}\pd{\y_j}\left(\y_k + 2\sum_{\ell=1}^\d \x_{k\ell}\pd{\y_\ell }\right) 
=  Q_k2\pd{\y_i}\pd{\y_j} -2\delta_{ik}\pd{y_j}-2\delta_{jk}\pd{y_i}.
\end{align*}
For $1\leq i \leq \d$ and $1\leq k \leq \d$, we have
\begin{align*}
\pd{\x_{ii}}Q_k
&=  -\pd{\x_{ii}}\left(\y_k + 2\sum_{\ell=1}^\d \x_{k\ell}\pd{\y_\ell }\right) 
=  Q_k\pd{\x_{ii}} -2\delta_{ik}\pd{y_i} , \\
\pd{\y_i}^2Q_k 
&=  -\pd{\y_i}^2\left(\y_k + 2\sum_{\ell=1}^\d \x_{k\ell}\pd{\y_\ell }\right) 
=  Q_k\pd{\y_i}^2-2\delta_{ik}\pd{y_i}.
\end{align*}
Therefore, the differential operator $Q_j$  commutes with 
$\pd{\x_{ij}}-2\pd{\y_i}\pd{\y_j}\, (1\leq i < j \leq \d)$ and 
$\pd{\x_{ii}}-\pd{\y_i}^2\, (1\leq i \leq \d)$.
Then we have
\[
(\pd{\x_{ij}}-2\pd{\y_i}\pd{\y_j})P_J
=P_J(\pd{\x_{ij}}-2\pd{\y_i}\pd{\y_j})
=0
\]
in $R/RI$.
Similarly we have
$
(\pd{\x_{ii}}-\pd{\y_i}^2)P_J =0.
$
\end{proof}

The following lemma corresponds to Lemma $\ref{lem:crsp}$.
\begin{lemma}\label{lem:reduction-op}
With the same notations as in Lemma $\ref{lem:crsp}$,
\begin{equation}\label{eq:reduction-op} 
  Q_j P_J = Q_{j,J} P_J= P_{J\backslash \{j\} } \quad (j\in J,\, J\subset [\d]).
\end{equation}
holds in $R/RI$.
\end{lemma}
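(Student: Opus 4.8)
The plan is to prove Lemma~\ref{lem:reduction-op} by reducing it to the corresponding statement for functions, namely Lemma~\ref{lem:crsp}, together with the structural description of $R/RI$ that is being built up in this subsection. First I would recall that $P_J = \prod_{j'\in[\d]\setminus J} Q_{j'}$ and that the operators $Q_{j'}$ commute with one another; hence $Q_j P_J = \prod_{j'\in ([\d]\setminus J)\cup\{j\}} Q_{j'} = P_{J\setminus\{j\}}$ \emph{as differential operators in $R$}, with no passage to the quotient needed for the outer equality. So the only content is the first equality $Q_j P_J \equiv Q_{j,J} P_J \pmod{RI}$, i.e. that replacing $Q_j$ by its ``restricted'' version $Q_{j,J}$, which drops the terms $\x_{jk}\pd{\y_k}$ for $k\notin J$, changes $Q_j P_J$ only by an element of $RI$.

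The key step is therefore to show $(Q_j - Q_{j,J})P_J \in RI$. By definition $Q_j - Q_{j,J} = -2\sum_{k\notin J}\x_{jk}\pd{\y_k}$. The natural approach is to push each $\pd{\y_k}$ with $k\notin J$ to the right through $P_J$. Since $P_J$ is a product of the $Q_{j'}$ for $j'\notin J$, and the commutator $[\pd{\y_k},Q_{j'}]$ is a constant (indeed from the computation in the proof of Lemma~\ref{lem:mod} one reads off $[\pd{\y_k},Q_{j'}] = -2\x_{j'k}\cdot 0 + \cdots$; more precisely $\pd{\y_k}Q_{j'} = Q_{j'}\pd{\y_k} - 2\x_{j'k}\pd{\y_k}$-type relations give scalar corrections), commuting $\pd{\y_k}$ to the right produces $P_J \pd{\y_k}$ plus lower-order terms that are again products of $Q_{j'}$'s (over subsets of $[\d]\setminus J$) times $\pd{\y_\ell}$'s. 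The point is then that each resulting term ends in some $\pd{\y_k}$ with $k\notin J$ acting on the far right; but acting on $g$ (equivalently, modulo $RI$) we may use that the generators $(\ref{ann:orth1})$ let us rewrite things, and more directly we use $(\ref{eq:PJ})$, $P_J g = g_J$, so that the operator identity we want is equivalent, after applying to $g$, to $Q_j g_J = Q_{j,J} g_J$, which is exactly the first equality of Lemma~\ref{lem:crsp}. To conclude an \emph{operator} congruence modulo $RI$ from a \emph{functional} identity, I would invoke that $I$ has been (or is being) shown to have holonomic rank $2^\d$ with the $P_J$ spanning $R/RI$: once a spanning set and matching rank are in hand, an operator annihilates $g$ at a generic point if and only if it lies in $RI$, so $(Q_j - Q_{j,J})P_J$, which kills $g$ by Lemma~\ref{lem:crsp}, lies in $RI$.

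Alternatively, and perhaps more self-containedly, I would argue purely algebraically: since $g_J$ does not depend on $\y_\ell$ for $\ell\notin J$, the operator $\pd{\y_\ell}$ with $\ell\notin J$ annihilates $g_J$; and $g_J = P_J g$, so $\pd{\y_\ell} P_J g = 0$. One then checks that $\pd{\y_\ell}P_J \in RI$ for $\ell \notin J$ directly: commute $\pd{\y_\ell}$ rightward through the $Q_{j'}$ as above to get $P_J\pd{\y_\ell} + (\text{terms of the same shape with fewer }Q\text{'s})$, and close an induction on $|[\d]\setminus J|$ whose base case uses that $\pd{\y_\ell}$ applied to $g$ (for arbitrary $\ell$) is expressible via $(\ref{ann:orth1})$ in terms already in $RI$. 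Granting $\pd{\y_\ell}P_J \in RI$ for all $\ell\notin J$, we get $(Q_j - Q_{j,J})P_J = -2\sum_{\ell\notin J}\x_{j\ell}\pd{\y_\ell}P_J \in RI$, which is the claim.

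The main obstacle I anticipate is the bookkeeping in commuting the $\pd{\y_\ell}$'s past the product $P_J$: each commutation spawns lower-order correction terms, and one must verify that every such correction term again ends in a $\pd{\y_m}$ with $m\notin J$ (or is otherwise manifestly in $RI$), so that the induction on $|[\d]\setminus J|$ actually closes. This is routine in spirit — it uses only the scalar commutators $[\pd{\y_\ell},Q_{j'}]$ computed essentially in Lemma~\ref{lem:mod} and the fact that all $Q_{j'}$ commute — but it is the one place where care is needed. Everything else is either the already-proved functional identity of Lemma~\ref{lem:crsp} or the trivial operator identity $Q_j P_J = P_{J\setminus\{j\}}$.
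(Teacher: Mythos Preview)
Your first approach, deducing the operator congruence from the functional identity of Lemma~\ref{lem:crsp} together with the rank of $I$, is circular: the rank computation (Corollary~\ref{cor:rank} and Theorem~\ref{thm:rank}) goes through Theorem~\ref{thm:Pfaff-D}, whose proof uses the present lemma. So that route is not available.

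Your second approach is on the right track but is both overcomplicated and imprecise about the commutators. You write that $[\pd{\y_k},Q_{j'}]$ is ``a constant'' and speak of ``$\pd{\y_k}Q_{j'}=Q_{j'}\pd{\y_k}-2\x_{j'k}\pd{\y_k}$-type relations'' and resulting ``lower-order correction terms''. In fact $[\pd{\y_k},Q_{j'}]=-\delta_{kj'}$, so for $k\neq j'$ the operators \emph{commute exactly}, and there are no correction terms at all. This is what the paper exploits. Since $\ell\notin J$ means $Q_\ell$ is one of the commuting factors of $P_J$, one places $Q_\ell$ at the right end and slides $\pd{\y_\ell}$ past every other $Q_{j'}$ for free:
\[
\pd{\y_\ell}P_J=\Bigl(\prod_{j'\notin J,\ j'\neq\ell}Q_{j'}\Bigr)\,\pd{\y_\ell}Q_\ell.
\]
The second key point, which you do not identify, is that $-\pd{\y_\ell}Q_\ell$ is \emph{literally} the generator~(\ref{ann:orth1}) with $i=\ell$; hence $\pd{\y_\ell}Q_\ell\in I$ and the displayed product lies in $D_{\x\y}\cdot I\subset I\subset RI$. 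No induction on $|[\d]\setminus J|$ is needed, and the ``main obstacle'' you anticipate simply does not arise. From $\pd{\y_\ell}P_J\in RI$ for every $\ell\notin J$ one gets $(Q_j-Q_{j,J})P_J=-2\sum_{\ell\notin J}\x_{j\ell}\,\pd{\y_\ell}P_J\in RI$, exactly as you say in your final paragraph; that part is fine.
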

\begin{proof}
By definition of $P_J$, it is clear that $Q_j P_J = P_{J\backslash \{j\} }$.
Since
$\pd{y_\ell }Q_\ell  
%= 2\sum_{k=1}^\d \x_{\ell k}\pd{\y_\ell }\pd{\y_k}  +\y_\ell \pd{\y_\ell } + 1 
\in I$,
$\pd{\ell}P_J$ is in $I$ if $\ell \notin J$.
Hence we have
\[
Q_j P_J 
=-\left(\y_j + 2\sum_{k=1}^\d \x_{jk}\pd{\y_k }\right) \prod_{\ell\in [\d]\backslash J} Q_\ell
= Q_{j,J} P_J
\]
in $R/RI$.
Note that $Q_j$ and $\pd{y_k}$ commute if $j\neq k$.
\end{proof}

\begin{theorem}\label{thm:Pfaff-D}
The following relations hold in $R/RI$ for any $J\subset [\d]$.
\begin{eqnarray}
 \label{eq:Pfaff-D-1}
  \pd{\y_i}P_J
  &\equiv& 
  \begin{cases}
   \mu_i^JP_J + \sum_{j\in J}\sigma_{ij}^JP_{J\backslash\{j\}} 
   & i \in J\\
   0
   & i \notin J, 
  \end{cases} \\ 
 \label{eq:Pfaff-D-2}
  \pd{\x_{ij}}P_J
  &\equiv& 
  \begin{cases}
%   \mu_j^J\left(\mu_i^JP_J+\sum_{\ell\in
%   J}\sigma_{i\ell}^JP_{J\backslash\ell}\right)\\
%   +\sum_{k\in J} \sigma_{jk}^J\left(\sum_{\ell\in J\backslash k}
%   \siPma_{i\ell}^{J\backslash k}P_{J\backslash\{k,\ell\}}+
%   \mu_i^{J\backslash k}P_{J\backslash k}\right)\\
%   +\sigma_{ij}^JP_J
   2\pd{\y_i}\pd{\y_j}P_J& \{i,j \}\subset J,\, i<j\\
   \pd{\y_i}^2P_J& \{i\}\subset J, \, i=j \\
   0 &  else.
  \end{cases}
\end{eqnarray}
\end{theorem}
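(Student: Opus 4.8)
The plan is to obtain Theorem~\ref{thm:Pfaff-D} as the operator-level counterpart, modulo $RI$, of the function recurrence in Theorem~\ref{thm:Pfaff}, transported through the identity $P_Jg=g_J$ of $(\ref{eq:PJ})$. I would assemble it from three facts already in hand: Lemma~\ref{lem:mod}, which disposes of $\pd{\x_{ij}}P_J$ whenever $\{i,j\}\subset J$; Lemma~\ref{lem:reduction-op}, namely $P_{J\backslash\{j\}}\equiv Q_{j,J}P_J$ in $R/RI$; and the observation that $\pd{\y_i}Q_i$ is, up to sign, the generator $(\ref{ann:orth1})$ with index $i$, hence lies in $I$.

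To prove $(\ref{eq:Pfaff-D-1})$ I split on whether $i\in J$. If $i\notin J$, I use that the $Q_{j'}$ commute to write $P_J=\tilde P\,Q_i$ with $\tilde P=\prod_{j'\in[\d]\backslash(J\cup\{i\})}Q_{j'}$; since $\pd{\y_i}$ commutes with every $Q_{j'}$ for which $j'\neq i$, it commutes with $\tilde P$, so $\pd{\y_i}P_J=\tilde P\,(\pd{\y_i}Q_i)$, which lies in $RI$ because $\pd{\y_i}Q_i\in I$ and $RI$ is a left ideal; hence $\pd{\y_i}P_J\equiv0$. If $i\in J$, I substitute $P_{J\backslash\{j\}}\equiv Q_{j,J}P_J$ into the claimed right-hand side to obtain $\bigl(\mu_i^J+\sum_{j\in J}\sigma_{ij}^JQ_{j,J}\bigr)P_J$, expand $Q_{j,J}=-\bigl(\y_j+2\sum_{k\in J}\x_{jk}\pd{\y_k}\bigr)$, and use $\Sigma_J\y_J=\mu_J$ together with the fact that $-2\Sigma_J\x_J$ is the identity matrix of size $|J|$; the coefficient operator then collapses to $\pd{\y_i}$. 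This last step is verbatim the computation already carried out in the proof of Theorem~\ref{thm:Pfaff}, now read inside $R/RI$.

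To prove $(\ref{eq:Pfaff-D-2})$, the case $\{i,j\}\subset J$ is exactly Lemma~\ref{lem:mod}. Otherwise at least one index $\ell\in\{i,j\}$ satisfies $\ell\notin J$, so $\pd{\y_\ell}P_J\in RI$ by the case of $(\ref{eq:Pfaff-D-1})$ just established; applying the remaining $y$-derivative and using once more that $RI$ is a left ideal shows $\pd{\y_i}\pd{\y_j}P_J\in RI$ (and $\pd{\y_i}^2P_J\in RI$ when $i=j$), after which Lemma~\ref{lem:mod} forces $\pd{\x_{ij}}P_J\equiv2\pd{\y_i}\pd{\y_j}P_J\equiv0$ (resp.\ $\pd{\x_{ii}}P_J\equiv\pd{\y_i}^2P_J\equiv0$).

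I do not expect a real obstacle: once Lemmas~\ref{lem:mod} and~\ref{lem:reduction-op} are available the argument is essentially bookkeeping. The one place that needs care is the case $i\notin J$ of $(\ref{eq:Pfaff-D-1})$: one must genuinely pull $Q_i$ to the right past $\tilde P$ so that the membership $\pd{\y_i}Q_i\in I$ can be combined with the left-ideal property of $RI$; merely writing ``$\pd{\y_i}Q_i\in I$, hence $\pd{\y_i}P_J\in I$'' is not by itself justified, since $I$ is only a left ideal. Everything else is the same linear algebra with $\Sigma_J$ and $\x_J$ that underlies Theorem~\ref{thm:Pfaff}.
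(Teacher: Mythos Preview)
Your proposal is correct and follows essentially the same approach as the paper: the case $i\in J$ of $(\ref{eq:Pfaff-D-1})$ via Lemma~\ref{lem:reduction-op} plus the linear-algebra computation from Theorem~\ref{thm:Pfaff}, the case $i\notin J$ via $\pd{\y_i}Q_i\in I$ (which is exactly how the paper proves Lemma~\ref{lem:reduction-op}), and $(\ref{eq:Pfaff-D-2})$ via Lemma~\ref{lem:mod}. Your handling of the ``else'' branch of $(\ref{eq:Pfaff-D-2})$ is in fact more explicit than the paper's one-line appeal to Lemma~\ref{lem:mod}, and your remark about needing to pull $Q_i$ to the right before invoking the left-ideal property is a valid point of care that the paper glosses over.
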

\begin{proof}
For $i\in J$, we can show the equation $(\ref{eq:Pfaff-D-1})$ 
by Lemma \ref{lem:reduction-op} and 
an analogous calculation as in the proof of (\ref{eq:Pfaff-1}).
For $i\notin J$, the equation (\ref{eq:Pfaff-D-1}) is shown 
as in the proof of Lemma \ref{lem:reduction-op}.
By Lemma $\ref{lem:mod}$, we have the equation $(\ref{eq:Pfaff-D-2})$.
\end{proof}

\begin{corollary} 
\label{cor:rank}
The set of the differential operators $P_J\, (J\subset [\d])$ in
 $(\ref{op:P_J})$ spans the quotient space $R/RI$ as a vector space over
 ${\bf C}(\x, \y)$. 
\end{corollary}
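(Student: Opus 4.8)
The plan is to show that the $\mathbf{C}(\x,\y)$-span of $\{P_J : J \subset [\d]\}$ in $R/RI$ is closed under left multiplication by all the generators $\x_{ij}$, $\y_k$, $\pd{\x_{ij}}$, $\pd{\y_k}$ of $R$; since $1 = P_{[\d]}$ lies in this span, it then follows that the span is all of $R/RI$. Closure under multiplication by the coordinate functions $\x_{ij}$ and $\y_k$ is automatic because these are scalars in $\mathbf{C}(\x,\y)$. Closure under $\pd{\y_i}$ is exactly the content of the recurrence $(\ref{eq:Pfaff-D-1})$ in Theorem \ref{thm:Pfaff-D}, which expresses $\pd{\y_i}P_J$ as a $\mathbf{C}(\x,\y)$-linear combination of $P_J$ and the $P_{J\setminus\{j\}}$. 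So the only remaining generators to handle are the $\pd{\x_{ij}}$.

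For the $\pd{\x_{ij}}$ I would use $(\ref{eq:Pfaff-D-2})$ to reduce to the $\pd{\y}$ case: modulo $RI$, $\pd{\x_{ij}}P_J$ equals (up to the factor $2-\delta_{ij}$) either $\pd{\y_i}\pd{\y_j}P_J$ when $\{i,j\}\subset J$, or $0$ otherwise. In the nonzero case I then apply $(\ref{eq:Pfaff-D-1})$ twice. Applying it once writes $\pd{\y_j}P_J$ as a $\mathbf{C}(\x,\y)$-combination of $P_J$ and various $P_{J\setminus\{j'\}}$; applying $\pd{\y_i}$ to each term and invoking $(\ref{eq:Pfaff-D-1})$ again (noting $\pd{\y_i}$ commutes with the scalar coefficients only up to adding further terms, which themselves stay in the span) expresses the result as a $\mathbf{C}(\x,\y)$-combination of operators $P_{K}$ with $K \subset J$. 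Hence $\pd{\x_{ij}}P_J$ stays in the span.

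Putting these together: the $\mathbf{C}(\x,\y)$-subspace $V$ of $R/RI$ spanned by $\{P_J\}$ is stable under left multiplication by every generator of $R$, hence $V$ is a left $R/RI$-submodule; since $\overline{1}=\overline{P_{[\d]}}\in V$ and $R/RI$ is generated by $\overline 1$ as a left $R$-module, we get $V=R/RI$. The only mildly delicate point is the bookkeeping in iterating $(\ref{eq:Pfaff-D-1})$ for the mixed second derivative $\pd{\y_i}\pd{\y_j}P_J$: one must be careful that differentiating the rational coefficients $\mu_i^J$ and $\sigma_{ij}^J$ with respect to $\y_j$ (which produces new terms) does not take us outside the span — but those derivatives are again rational functions and multiply the same $P_K$'s, so closure is maintained. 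This is the step I expect to require the most care, though it is routine.
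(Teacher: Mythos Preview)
Your proposal is correct and is precisely the argument the paper leaves implicit: the corollary is stated immediately after Theorem~\ref{thm:Pfaff-D} with no separate proof, so the intended reasoning is exactly that the relations \eqref{eq:Pfaff-D-1}--\eqref{eq:Pfaff-D-2} make the $\mathbf{C}(\x,\y)$-span of the $P_J$ stable under every $\pd{\y_i}$ and $\pd{\x_{ij}}$, and since $P_{[\d]}=1$ this forces the span to be all of $R/RI$. One minor wording fix: you want $V$ to be a left $R$-submodule of $R/RI$, not an ``$R/RI$-submodule.''
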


This corollary together with Lemma \ref{lem:lower-bound} establishes  the following theorem.
\begin{theorem}\label{thm:rank}
% The holonomic rank of the ideal in theorem $(\ref{thm:hol-orth})$ is $2^\d$.
\[
\rank{I} = 2^\d
\]
\end{theorem}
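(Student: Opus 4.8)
The plan is to derive $\rank{I}=2^\d$ by combining the two bounds already in hand. The lower bound $\rank{I}\geq 2^\d$ is Lemma~\ref{lem:lower-bound}, established via the $2^\d$ linearly independent solutions $g_\varepsilon$. For the upper bound I would invoke Corollary~\ref{cor:rank}, which asserts that the $2^\d$ operators $P_J\ (J\subset[\d])$ span $R/RI$ over ${\bf C}(\x,\y)$; hence $\dim_{{\bf C}(\x,\y)} R/RI \leq 2^\d$, i.e.\ $\rank{I}\leq 2^\d$. Putting the two inequalities together gives the equality, so the proof is essentially one line.

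The only genuine work is in justifying Corollary~\ref{cor:rank}, which the excerpt states without a separate proof; if the referee wanted it spelled out I would argue as follows. One shows that every class $[\pd{\x}^a\pd{\y}^b]$ in $R/RI$ can be rewritten, modulo $RI$, as a ${\bf C}(\x,\y)$-linear combination of the $P_J$. First, by the relations in Lemma~\ref{lem:mod} (coming from operators \eqref{ann:orth2} and \eqref{ann:orth3} in $I$), every $\pd{\x_{ij}}$ can be eliminated in favor of $\pd{\y}$-derivatives, so it suffices to reduce monomials $\pd{\y}^b$. Then one uses the relations $\pd{\y_i}P_J$ from Theorem~\ref{thm:Pfaff-D}, together with the fact that the operators $Q_j$ in \eqref{op:Qj} are $D_{\x\y}$-unimodular in the $\pd{\y}$-variables (each $Q_j = -\y_j - 2\sum_k \x_{jk}\pd{\y_k}$ has an invertible ``leading'' coefficient matrix $-2\x$ on the positive-definite locus), to see that $\{P_J\}$ and $\{\pd{\y}^b : b\in\{0,1\}^\d\}$ span the same ${\bf C}(\x,\y)$-subspace of $R/RI$; the relations \eqref{ann:orth1} bring down any $\pd{\y_i}$-degree exceeding $1$, because $2\sum_k \x_{ik}\pd{\y_i}\pd{\y_k} \equiv -\y_i\pd{\y_i}-1 \pmod{RI}$ lets one solve for $\pd{\y_i}^2$ in terms of lower-order terms. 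This is the step I expect to be the main obstacle: carefully checking that the ideal membership of \eqref{ann:orth1}--\eqref{ann:orth3} really does reduce the span to the $2^\d$ operators $P_J$ and no proper subspace is missed.

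I would therefore keep the final theorem's proof short, essentially as follows.

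\begin{proof}
By Corollary~\ref{cor:rank}, the $2^\d$ operators $P_J\ (J\subset[\d])$ span $R/RI$ over ${\bf C}(\x,\y)$, so $\rank{I}=\dim_{{\bf C}(\x,\y)} R/RI \leq 2^\d$. On the other hand, Lemma~\ref{lem:lower-bound} gives $\rank{I}\geq 2^\d$. Hence $\rank{I}=2^\d$.
\end{proof}

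A final remark worth adding is that the equality $\rank{I}=2^\d$ together with Theorem~\ref{thm:Pfaff-D} shows the differential recurrence formulas \eqref{eq:Pfaff-1} and \eqref{eq:Pfaff-2} indeed constitute a Pfaffian system of rank $2^\d$ for the vector $G(\x,\y)=(g_J)_{J\subset[\d]}$, with no singularities on the positive-definite locus \eqref{positive-definite}; since the $g_\varepsilon$ of Lemma~\ref{lem:lower-bound} are honest solutions there, the matrices $A_{ij},A_i$ are exactly the connection matrices of the rank-$2^\d$ integrable connection $R/RI$, which is what the holonomic gradient method requires.
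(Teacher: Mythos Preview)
Your proof of Theorem~\ref{thm:rank} is correct and matches the paper's exactly: the paper simply says ``This corollary together with Lemma~\ref{lem:lower-bound} establishes the following theorem,'' which is precisely your one-line argument combining the upper bound from Corollary~\ref{cor:rank} with the lower bound from Lemma~\ref{lem:lower-bound}.

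One minor comment on your supplementary sketch of Corollary~\ref{cor:rank}: your reduction strategy (eliminate $\pd{\x_{ij}}$, then lower the $\pd{\y}$-degree via \eqref{ann:orth1}) works, but the paper intends a more direct route straight from Theorem~\ref{thm:Pfaff-D}. Namely, the relations \eqref{eq:Pfaff-D-1} and \eqref{eq:Pfaff-D-2} show that the ${\bf C}(\x,\y)$-span of $\{P_J : J\subset[\d]\}$ in $R/RI$ is closed under left multiplication by every $\pd{\y_i}$ and every $\pd{\x_{ij}}$, hence under left multiplication by all of $R$; since $P_{[\d]}=1$ lies in this span (empty product), the span is all of $R/RI$. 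This avoids having to separately argue that $\{\pd{\y}^b : b\in\{0,1\}^\d\}$ and $\{P_J\}$ span the same subspace.
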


\section{Numerical experiments}\label{sec:numerical}
In this section we present numerical experiments of our holonomic gradient method for
orthant probabilities. Our experiments show that the holonomic gradient method is very accurate
and fast compared to existing methods.

By theorem \ref{thm:Pfaff}, we have an explicit formula for $\frac{\partial}{\partial t}G(x(t), y(t))$
when $x(t)$ and $y(t)$ are smooth functions.
In order to evaluate $G(x,y)$ at $(x_1, y_1)$, we put
\begin{equation}\label{path}
x(t) = (1-t)x_0+tx_1,\ y(t)=ty_1\quad 0\leq t \leq 1.
\end{equation}
Here, $x_0$ is the diagonal matrix whose $(i,i)$-entry equals to that of $x_1$.
The initial value can be written as 
\begin{equation}
 \label{eq:init_val}
  g_J(\x(0), \y(0)) 
=\prod_{j\in J} \left(-\frac{\pi}{4}\x_{jj}(0) \right)^{\frac{1}{2}} .
\end{equation}
Note that $\frac{\partial}{\partial t}G(x(t),y(t))$ does not have singular points on $[0,1]$ 
since the Pfaffian system for $G(x,y)$ does not have singular point on \eqref{positive-definite}.
%The updating step of the holonomic gradient method was implemented using 
%the relations $(\ref{eq:Pfaff-1}),(\ref{eq:Pfaff-2})$ for a general
%dimension.
%When $\y = 0$ and $\x$ is diagonal, the integral $g_J(\x, \y)$
%can be written as 
%\begin{equation}
% \label{eq:init_val}
%  g_J(\x, \y) 
%=\prod_{j\in J} \left(-\frac{\pi}{4}\x_{jj} \right)^{\frac{1}{2}} .
%\end{equation}
%We use the value $(\ref{eq:init_val})$ for initial values. 

The accuracy of the holonomic gradient method can be checked by looking at
the summation 
$
\sum_{\varepsilon\in \{\pm 1\}^\d} {\bf P}(X\in E_\varepsilon) = 1
$.
Table $\ref{tab:sum}$ shows errors
$|1-\sum_{\varepsilon\in \{\pm 1\}^\d} {\bf P}(X\in E_\varepsilon)|$
for sample data.
\begin{table}[htpp]
\caption{Errors} \label{tab:sum}
\begin{center}
\begin{tabular}{|c|c|l|}
\hline %***
No. &dim &        error \\ \hline
1& 2  & 1.760124e-08 \\ \hline % sigma_mu2-5.txt
2& 3  & 5.473549e-08 \\ \hline % sigma_mu3-4.txt
3& 4  & 3.373671e-08 \\ \hline % sigma_mu4-1.txt
4& 5  & 2.265284e-09 \\ \hline % sigma_mu5-1.txt
5& 6  & 1.120033e-08 \\ \hline % sigma_mu6-1.txt
6& 7  & 7.330036e-09 \\ \hline % sigma_mu7-1.txt
7& 8  & 8.705609e-09 \\ \hline % sigma_mu8-1.txt
8& 9  & 2.288549e-09 \\ \hline % sigma_mu9-1.txt
9&10  & 5.024879e-10 \\ \hline % sigma_mu10-1.txt
\end{tabular}
\end{center}
\end{table}

For a correlation matrix $ R= \{ \rho_{ij} \} $ 
with $\rho_{ij} = \rho, i\neq j$,
the orthant probability can be written as a one-dimensional integral.
Table \ref{tab:comp} shows the values of the orthant probability
calculated by the one-dimensional integral  and HGM for certain values of $\rho$.
The differences of the result of HGM and the one-dimensional integral
are less than $10^{-06}$.
\begin{table}[htpp]
\caption{Comparing of one-dimensional integral} \label{tab:comp}
\begin{center}
\begin{tabular}{|c|l|l|l|}
\hline
 rho  & Dunnett                &  HGM               & error\\ \hline
0.0  & 0.00097656249807343551 &  0.000976562500000  &   1.926564e-12\\ \hline
0.1  & 0.0065864743711607976  &  0.006586475124405  &   7.532442e-10\\ \hline
0.25 & 0.026603192349344017   &  0.026603192572268  &   2.229240e-10\\ \hline
0.5  & 0.090909089922689604   &  0.090909086078297  &   3.844393e-09\\ \hline
\end{tabular}
\end{center}
\end{table}

Table \ref{computational times} shows averages of 
computational times of holonomic gradient method and
Miwa's method for 100 sample data.
The mean vectors of the sample data are all zero, 
and the covariance matrices are randomly generated.
Table \ref{computational times} shows that
our holonomic gradient method evaluates 
orthant probabilities faster than Miwa's method, as the dimension becomes
larger.

However,  it should be noted that 
the holonomic gradient method can be very slow
when the mean vector is far away from zero.
When the mean vector is far away from zero and some eigenvalue of 
the covariance matrix is very small, the value of 
the integral \eqref{eq:g} becomes very large 
since the parameter $y$ becomes large.
In such a case, the Runge-Kutta method takes a long time.

\begin{table}[h]
\begin{center}
\caption{Averages of computational times}\label{computational times}
\begin{tabular}{|c|c|c||c|c|c|}
\hline
dim & Miwa & HGM & dim & Miwa & HGM\\ \hline
% 1&  0.00000 &0.000000000\\ \hline
% 2&  0.00072 &0.000000000\\ \hline
% 3&  0.00084 &0.000000000\\ \hline
% 4&  0.00100 &0.001515152\\ \hline
 5&  0.002 &0.016 &  9&   6.078 &1.050 \\ \hline
 6&  0.011 &0.056 & 10&  60.171 &2.371\\ \hline
 7&  0.080 &0.154 & 11& 671.370 &5.411\\ \hline
 8&  0.664 &0.390 & 12&   -    &13.48\\ \hline
\end{tabular}
\end{center}
\end{table}
%\begin{figure}[htbp]
%  \begin{center}
%    \includegraphics[width=10cm]{graph-time.eps}
%  \end{center}
%  \caption{Averages of computational times}
%\end{figure}

\end{document}